\newtheorem{thm}{Theorem}[section]
\newtheorem{lem}[thm]{Lemma}
\newtheorem{cor}[thm]{Corollary}
\newtheorem{prop}[thm]{Proposition}
\newtheorem{ex}[thm]{Example}
\newtheorem*{prob*}{Open problem}
\theoremstyle{definition}
\newtheorem{defi}[thm]{Definition}
\theoremstyle{remark}
\newtheorem{rem}[thm]{Remark}
\newtheorem*{rem*}{Remark}
\DeclareMathOperator{\Hom}{Hom}
\newcommand{\kringel}{\mathbin{\raise1pt\hbox{$\scriptstyle\circ$}}}
\newcommand{\pkt}{\mathbin{\raise0pt\hbox{$\scriptstyle\bullet$}}}
\newcommand{\C}{\mathbb{C}}
\newcommand{\K}{\mathbb{K}}
\newcommand{\N}{\mathbb{N}}
\newcommand{\R}{\mathbb{R}}
\newcommand{\ad}{{\rm ad}}
\newcommand{\End}{{\rm End}}
\newcommand{\Der}{{\rm Der}}
\newcommand{\Inn}{{\rm Inn}}
\newcommand{\diag}{\mathop{\rm diag}}
\newcommand{\Lg}{\mathfrak{g}}
\newcommand{\Lh}{\mathfrak{h}}
\newcommand{\CB}{\mathcal{B}}
\newcommand{\CC}{\mathcal{C}}
\newcommand{\abs}[1]{\lvert#1\rvert}
\newcommand{\al}{\alpha}
\newcommand{\be}{\beta}
\newcommand{\ep}{\varepsilon}
\newcommand{\la}{\lambda}
\newcommand{\Ga}{\Gamma}
\newcommand{\ra}{\rightarrow}
\renewcommand{\phi}{\varphi}
\newcommand{\AID}{{\rm AID}}
\newcommand{\bigzero}{\mbox{\normalfont\Large\mdseries \; 0\; }}
\newcommand{\rvline}{\hspace*{-\arraycolsep}\vline\hspace*{-\arraycolsep}}
\begin{document}


\title[Almost inner derivations]{Almost inner derivations of 2-step nilpotent Lie algebras of genus 2}

\author[D. Burde]{Dietrich Burde}
\author[K. Dekimpe]{Karel Dekimpe}
\author[B. Verbeke]{Bert Verbeke}
\address{Fakult\"at f\"ur Mathematik\\
Universit\"at Wien\\
Oskar-Morgenstern-Platz 1\\
1090 Wien \\
Austria}
\email{dietrich.burde@univie.ac.at}
\address{Katholieke Universiteit Leuven Kulak\\
E. Sabbelaan 53 bus 7657\\
8500 Kortrijk\\
Belgium}
\email{karel.dekimpe@kuleuven.be}
\email{bert.verbeke@kuleuven.be}

\date{\today}

\subjclass[2000]{Primary 17B30, 17D25}
\keywords{Almost inner derivation, matrix pencil, nilpotent Lie algebra}

\begin{abstract}
We study almost inner derivations of $2$-step nilpotent Lie algebras
of genus $2$, i.e., having a $2$-dimensional commutator ideal, using matrix pencils.
In particular we determine all almost inner derivations of such algebras in terms of minimal indices and elementary divisors
over an arbitrary algebraically closed field of characteristic not $2$ and over the real numbers. 
 \end{abstract}

\maketitle

\section{Introduction}

One of the fundamental problems in spectral geometry is, to what extent the eigenvalues determine 
the geometry of a given manifold. A classical question here, going back to Hermann Weyl, asks whether 
or not isospectral manifolds need to be isometric. A first example with a negative answer was given in 
$1964$ by John Milnor. He constructed a pair of isospectral but non-isometric flat tori
of dimension $16$ using arithmetic lattices. Several other examples were given later and in $1984$ 
Gordon and Wilson \cite{GOW} even constructed continuous families of isospectral non-isometric manifolds. 
These manifolds were compact
Riemannian manifolds of the form $G/\Ga$ with a simply connected exponential solvable group $G$ and a discrete cocompact
subgroup $\Gamma$. The isospectral non-isometric property was derived from the existence of almost inner but non-inner
automorphisms of the Lie group $G$. Such automorphisms of $G$ can be studied on the Lie algebra level. They correspond
to {\em almost inner but non-inner derivations} of the Lie algebra $\Lg$ of $G$. \\[0.2cm]
Denote by  $\AID(\Lg)$ the subalgebra  of the derivation algebra $\Der(\Lg)$ consisting of almost inner derivations,
for a given finite-dimensional Lie algebra over a field $\K$. The motivation from spectral geometry and other applications
leads us to study the algebras $\AID(\Lg)$ in general.
Several classes of Lie algebras do not possess an almost inner but non-inner derivation.
Clearly any semisimple Lie algebra $\Lg$ over a field of characteristic zero satisfies $\AID(\Lg)=\Inn(\Lg)$. The same is true
for free-nilpotent Lie algebras or almost abelian Lie algebras, see \cite{BU61}, and for many other classes of Lie algebras.
Also all complex Lie algebras of dimension $n\le 4$ have no almost inner but non-inner derivations. In dimension $5$ however
there exist complex Lie algebras with almost inner but non-inner derivations. An example is the filiform nilpotent
Lie algebra with basis $\{x_1,\ldots ,x_5\}$ and brackets $[x_1,x_i]=x_{i+1}$ for $2\le i\le 4$ and $[x_2,x_3]=x_5$.
As it turns out, we can construct many almost inner but non-inner derivations for certain nilpotent Lie algebras and
in particular for certain {\em $2$-step nilpotent} Lie algebras. \\[0.2cm]
In this article we compute $\AID(\Lg)$ for all $2$-step nilpotent Lie algebras with a $2$-dimensional
commutator ideal over $\R$ and over an algebraically closed field $\K$ of characteristic not $2$.
Such Lie algebras can be described by skew-symmetric matrix pencils and their invariant polynomials
and elementary divisors due to Weierstrass and Kronecker. Using these canonical invariants helps a lot in computing
the almost inner derivations of such Lie algebras and we obtain an explicit formula. \\[0.2cm]
In section $3$ we review all necessary notions concerning matrix pencils, minimal indices and elementary divisors.
We give several examples of $2$-step nilpotent Lie algebras of genus $2$ with their associated invariants.
In section $4$ we introduce the subalgebra of central derivations $\CC(\Lg)$ in $\Der(\Lg)$, which contains
$\AID(\Lg)$. We give necessary and sufficient criteria for a derivation $D\in \CC(\Lg)$ to be almost inner.
Finally, in section $5$, we compute $\AID(\Lg)$ separately for each type of elementary divisor 
or minimal index and combine this for a general formula for $\AID(\Lg)$ in Theorems $\ref{5.10}$
and $\ref{5.11}$.

\section{Preliminaries}

Let $\Lg$ denote a Lie algebra over an arbitrary field $\K$ if not said otherwise. We will always assume that
$\Lg$ is finite-dimensional over $\K$.
The {\it lower central series} of $\Lg$ is given by
$\Lg^0=\Lg \supseteq \Lg^1 \supseteq \Lg^2 \supseteq \Lg^3  \supseteq \cdots$ where the ideals $\Lg^i$ are
recursively defined by $\Lg^i=[\Lg,\Lg^{i-1}]$ for all $i\ge 1$. A Lie algebra $\Lg$ is called {\em $c$-step nilpotent}
if $\Lg^c=0$. The {\em genus} of a Lie algebra $\Lg$ is the number $\dim(\Lg)-\abs{S}$, where $S$ is a minimal system
of generators. If $\Lg$ is nilpotent, then the genus is given by
$\dim (\Lg)-\dim \left(\Lg/[\Lg,\Lg]\right)=\dim([\Lg,\Lg])$. \\[0.2cm]
We recall the definition of an almost inner inner derivation \cite{GOW,BU55}.

\begin{defi}\label{aid}
A derivation $D\in \Der(\Lg)$ of a Lie algebra $\Lg$ is said to be {\em almost inner}, if $D(x)\in [\Lg,x]$ for all
$x\in \Lg$. The space of all almost inner derivations of $\Lg$ is denoted by $\AID(\Lg)$. 
\end{defi}

Let us denote by $\Inn(\Lg)$ the ideal of inner derivations in $\Der(\Lg)$. Certainly every inner derivation
of $\Lg$ is almost inner. The converse need not be true. So we obtain a chain of subalgebras
$\Inn(\Lg)\subset \AID(\Lg)\subset \Der(\Lg)$. \\[0.2cm]
Let $\Lg$ be a $2$-step nilpotent Lie algebra. Then $\Lg^2=[\Lg,[\Lg,\Lg]]=0$ so that $[\Lg,\Lg]\subseteq Z(\Lg)$.
If $\Lg$ has genus $1$, then $\Lg$ is a central extension of a $2n+1$-dimensional Heisenberg Lie algebra
and we have $\AID(\Lg) =\Inn(\Lg)$ by Lemma $3.3$ and Corollary $3.6$ of \cite{BU55}.
The next interesting case is genus $2$, which we will study here.

\section{Pencils, elementary divisors and minimal indices}

Two-step nilpotent Lie algebras have not been classified in general so far. For certain subclasses however
there is a complete description. We are interested in two-step nilpotent Lie algebras $\Lg$ of genus $2$, i.e.,
satisfying $\dim ([\Lg,\Lg])=2$. Such algebras can be described in terms of matrix pencils. This has been studied for
several purposes in the literature, see \cite{GAU,LET,GAN,THO,LAR,GAT,LAO}. We mainly follow the notation of \cite{GAU}.
Let $\K[\la,\mu]$ be the polynomial ring in two variables.

\begin{defi}
Let $A,B\in M_n(\K)$. A polynomial matrix $\mu A+\la B \in M_n(\K[\la,\mu])$ is called a {\em matrix pencil} or just a
{\em pencil}. Two such pencils $\mu A+\la B$ and $\mu C+\la D$ are called {\em strictly equivalent} if there are matrices
$S,T\in GL_n(\K)$ satisfying
\[
S(\mu A+\la B)T = \mu C+\la D.  
\]
The pencil is called {\em skew} if both $A$ and $B$ are skew-symmetric. Two skew-symmetric pencils
$\mu A+\la B$ and $\mu C+\la D$ are called {\em strictly congruent} if there is a matrix $S\in GL_n(\K)$
such that $S^t(\mu A +\la B)S=\mu C+\la D$. A pencil is called {\em regular} or {\em non-singular} if its determinant is not
the zero polynomial in $\K[\la,\mu]$.
\end{defi}  

It is known that skew-symmetric pencils over an algebraically closed field of characteristic different from two 
are strictly equivalent if and only if they are strictly congruent \cite{GAU}. The same is true over the field of
real numbers \cite{LAR}. \\[0.2cm]
Let $\Lg$ be a two-step nilpotent Lie algebra of genus $2$ with a basis $(x_1,\ldots ,x_n,y_1,y_2)$, where
$(y_1,y_2)$ is a basis of $[\Lg,\Lg]$. Denote by $A=(a_{ij})$ and $B=(b_{ij})$ the skew-symmetric matrices
of structure constants determined by
\[
[x_i,x_j]=a_{ij}y_1+b_{ij}y_2
\]
for all $1\le i,j\le n$. 

\begin{defi}
Let $\Lg$ be a two-step nilpotent Lie algebra of genus $2$ over an arbitrary field $\K$. Then
$\mu A+\la B \in M_n(\K[\la,\mu])$ is called the {\em pencil  associated to $\Lg$} with respect to a given basis
as above.
\end{defi}  

The following proposition is a special case of \cite[Proposition 4.1]{LET}

\begin{prop}\label{prop-iso}
Let $\Lg$ and $\Lh$ be two-step nilpotent Lie algebras of genus $2$ over an arbitrary field $\K$. If the pencils associated to
$\Lg$ and $\Lh$ with respect to some bases of $\Lg$ and $\Lh$ are strictly congruent, then $\Lg$ and $\Lh$ are isomorphic.
\end{prop}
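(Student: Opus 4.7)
The plan is to use the matrix $S$ from the strict congruence to write down an explicit Lie algebra isomorphism between $\Lg$ and $\Lh$, and then to verify directly that the bracket relations match.

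First, I would fix bases $(x_1,\ldots,x_n,y_1,y_2)$ of $\Lg$ and $(x_1',\ldots,x_n',y_1',y_2')$ of $\Lh$ so that $(y_1,y_2)$ and $(y_1',y_2')$ are bases of $[\Lg,\Lg]$ and $[\Lh,\Lh]$ respectively, and the associated pencils are $\mu A+\la B$ and $\mu C+\la D$, with structure constants $[x_i,x_j]=a_{ij}y_1+b_{ij}y_2$ and $[x_i',x_j']=c_{ij}y_1'+d_{ij}y_2'$. The hypothesis that these pencils are strictly congruent provides $S\in GL_n(\K)$ with $S^t(\mu A+\la B)S=\mu C+\la D$, and comparing coefficients of $\la$ and $\mu$ yields the two separate matrix identities $S^t A S=C$ and $S^t B S=D$.

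Next, I would define a linear map $\phi:\Lh\to\Lg$ by $\phi(x_j')=\sum_{k=1}^n S_{kj}\,x_k$ for $1\le j\le n$ and $\phi(y_i')=y_i$ for $i=1,2$. Since $S$ is invertible, $\phi$ is a linear isomorphism of vector spaces. To check that it is a Lie algebra homomorphism, I would observe that $[\Lg,\Lg]$ and $[\Lh,\Lh]$ are central in their respective algebras (as both are $2$-step nilpotent), so all brackets involving any $y_i$ or $y_i'$ vanish on both sides, and only the brackets $[\phi(x_i'),\phi(x_j')]$ need verification. A direct computation gives
\[
[\phi(x_i'),\phi(x_j')]=\sum_{k,l} S_{ki}S_{lj}[x_k,x_l]=(S^t A S)_{ij}\,y_1+(S^t B S)_{ij}\,y_2=c_{ij}y_1+d_{ij}y_2=\phi([x_i',x_j']),
\]
which is exactly the required identity.

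No step in this argument is genuinely hard; the only thing to be careful about is the bookkeeping of transposes and the ordering of $S$ relative to $S^t$, since strict congruence is precisely tailored to convert a change of basis on the $x$-vectors into the correct transformation of the two structure-constant matrices $A$ and $B$ simultaneously. Working in the direction $\Lh\to\Lg$ (rather than $\Lg\to\Lh$) is a minor convenience, as the identities $S^t A S=C$ and $S^t B S=D$ then feed directly into the bracket computation without requiring $S^{-1}$.
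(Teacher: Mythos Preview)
Your argument is correct: splitting the congruence $S^t(\mu A+\la B)S=\mu C+\la D$ into $S^tAS=C$ and $S^tBS=D$, and then using $S$ to define the change of basis on the $x$-part while fixing the $y$-part, yields a linear isomorphism that is easily checked to preserve brackets. The index bookkeeping in your displayed computation is accurate, and the observation that all brackets involving a $y_i$ or $y_i'$ vanish (since $[\Lg,\Lg]\subseteq Z(\Lg)$ in the $2$-step case) handles the remaining cases.

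As for the comparison: the paper does not actually supply a proof of this proposition; it simply records the statement as a special case of \cite[Proposition 4.1]{LET}. Your write-up therefore \emph{adds} a self-contained elementary argument where the paper relies on an external reference. The advantage of your approach is that the reader sees exactly how the congruence matrix $S$ becomes the isomorphism; the advantage of the paper's approach is brevity and the pointer to the more general statement in \cite{LET}.
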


From this proposition it follows that it is important for our study to be able to classify skew pencils up to strict equivalence.
For regular pencils this was solved by Weierstrass in terms of {\em elementary divisors}.
For a pencil $\mu A+\la B$ of rank $r$ let $G_m(\mu,\la)$ be the greatest
common divisor of all its minor determinants of order $m$. Then $G_{m}(\mu,\la)\mid G_{m+1}(\mu,\la)$ for all
$1 \le m\le r-1$. Let $i_1(\mu,\la)=G_1(\mu,\la)$ and
\[
i_m(\mu,\la)=\frac{G_{m}(\mu,\la)}{G_{m-1}(\mu,\la)}
\]  
for $1<m\le r$.

\begin{defi}
The homogeneous polynomials $\{i_m(\mu,\la)\}_m$ are called the {\em invariant factors} of the pencil $\mu A+\la B$.
Each polynomial $i_m(\mu,\la)$ can be written as a product of powers of prime polynomials because $\K[\la,\mu]$
is a unique factorization domain. These prime power factors (which are only determined up to scalar multiple) are called
the {\em elementary divisors} $e_a(\mu,\la)$, for $a=1,2,\ldots ,t$ of the pencil $\mu A+\la B$. An elementary divisor is
said to have {\em multiplicity $\nu$} if it appears exactly $\nu$ times in the factorizations of the invariant factors
$i_m(\mu,\la)$ for $1\le m\le r$.
\end{defi}  

Suppose that $\K$ is algebraically closed. In this case, the elementary divisors are all linear. Since the elementary
divisors are only determined up to scalar multiple, each elementary divisor is either of type $(b \mu+\la)^e$ or of type  $\mu^f$. 
The first one is called of {\em finite type}. The second one is called of {\em infinite type}, which means that the divisor
belongs to $\K[\mu]$. Elementary divisors of infinite type exist if and only if $\det(B)=0$. 
The elementary divisors $e_a(\mu,\la)$ of finite type correspond to the elementary divisors of the pencil  $A+\la B\in \K[\la]$
as follows. Setting $\mu=1$ in  $e_a(\mu,\la)$ we clearly obtain the elementary divisors $e_a(\la)$ of  $A+\la B$.
These can be computed by the {\em Smith normal form} because $\K[\la]$ is a PID. The diagonal elements of the Smith normal form
are just the invariant polynomials. Conversely, from each elementary divisor $e_a(\la)$ of $A+\la B$ of degree $e$
we obtain the corresponding elementary divisor $e_a(\mu,\la)$ by $e_a(\mu,\la)=\mu^ee_a(\frac{\la}{\mu})$.
In case $\K=\R$, apart from these elementary divisors of degree 1, there are also elementary divisors of degree 2 which are of
the form $(\lambda - \mu(a+b i))(\lambda - \mu (a-bi)) = \lambda^2 - 2 a \lambda \mu + (a^2+b^2) \mu^2$, with
$a\in \R,\; b \in \R^\times$.

\begin{ex}\label{3.4}
Let $\Lg$ be the $6$-dimensional real Lie algebra with basis $\{x_1,\ldots ,x_4,y_1,y_2\}$ and Lie brackets defined by
\begin{align*}
[x_1,x_2] & = y_1, \; [x_2,x_4]=y_2,\\
[x_1,x_3] & = y_2, \; [x_3,x_4]=-y_1. 
\end{align*}
Then the associated pencil is given by
\[
\mu A+\la B=\begin{pmatrix} 0 & \mu & \la & 0 \\ -\mu & 0 & 0 & \la \\
-\la & 0 & 0 & -\mu \\ 0 & -\la & \mu & 0 \end{pmatrix}.    
\]
The pencil is regular because $\det(\mu A+\la B)=(\mu^2+\la^2)^2$ is not the zero polynomial. The Smith normal form
of $A+\la B$ is given by $\diag(1,1,\la^2+1,\la^2+1)$. Hence there is one elementary
divisor $e_1(\mu,\la)=\mu^2(1+\frac{\la^2}{\mu^2})=\mu^2+\la^2$ of finite type of multiplicity $2$ and no elementary
divisor of infinite type.\\
When we consider the complexification $\Lg \otimes \C$ of this Lie algebra, then the corresponding pencil has two elementary
divisors: $\lambda - i \mu$ and $\lambda + i\mu$ both of multiplicity 2. 
\end{ex}  

For singular pencils we still need another invariant. Let $\mu A+\la B$ be a singular pencil of size $n$.
Then $(A+\la B)x=0$ has a nonzero solution in $\K[\la]^n$. Let $x_1(\la)$ be such a nonzero solution
of minimal degree $\ep_1$. Of all solutions which are $\K[\lambda]$-independent of  $x_1(\la)$ let us select a
solution  $x_2(\la)$ of minimal degree $\ep_2$. It is obvious that $\ep_1\le \ep_2$. By continuing this process
we obtain a set $x_1(\la),\ldots ,x_k(\la)$ of solutions, which is a maximal set of elements in $\K[\la]^n$
satisfying $(A+\la B)x_i(\la)=0$ for $i=1,\ldots ,k$ and being $\K[\la]$-independent. We have $k\le n$.
Note that this set is not uniquely determined, but that different sets have the same minimal degrees
$\ep_1\le \ep_2 \le \ldots \le \ep_k$. Hence the following notion is well-defined.

\begin{defi}
Let  $\mu A+\la B$ be a singular pencil. The associated numbers $\ep_1,\ldots ,\ep_k$ are called
the {\em minimal indices} of the pencil $\mu A+\la B$.
\end{defi}

\begin{ex}\label{3.6}
Let $\Lg$ be the $7$-dimensional real Lie algebra with basis $\{x_1,\ldots ,x_5,y_1,y_2\}$ and Lie brackets defined by
\begin{align*}
[x_1,x_3] & = y_1, \; [x_2,x_4]=y_1,\\
[x_1,x_4] & = y_2, \; [x_2,x_5]=y_2. 
\end{align*}
Then the associated pencil is given by
\[
\mu A+\la B=\begin{pmatrix} 0 & 0 & \mu & \la & 0 \\ 0 & 0 & 0 & \mu & \la \\
 -\mu & 0 & 0 & 0 & 0 \\ -\la & -\mu & 0 & 0 & 0 \\ 0 & -\la & 0 & 0 & 0 \end{pmatrix}.    
\]
Since $\det(\mu A+\la B)=0$ the pencil is singular. The equation $(A+\la B)x=0$ has a non-zero solution
$x_1(\la)=(0,0,\la^2,-\la,1)^t$, and the set is maximal. Hence there is one minimal index $\ep_1=2$. The Smith
normal form of $A+\la B$ is given by $\diag(1,1,1,1,0)$.
\end{ex}

The following well-known result classifies skew pencils up to congruence, see Corollary $6.6$ in \cite{GAU}
and Theorem $3.4$ in \cite{LET}.

\begin{prop}\label{3.7}
Let $\K$ be an algebraically closed field of characteristic not $2$ or the field of real numbers. Two skew-symmetric
pencils of the same dimension are strictly congruent if and only if they have the same elementary divisors and the
same minimal indices.
\end{prop}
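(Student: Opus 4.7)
The plan is to split the argument into the easy forward implication and a harder converse, both of which reduce to the classical Kronecker classification of matrix pencils under strict equivalence combined with the already-recalled fact that, over the fields in question, strict equivalence of skew-symmetric pencils implies strict congruence.

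For the forward direction, suppose $S^{t}(\mu A+\la B)S=\mu C+\la D$ for some $S\in GL_n(\K)$. This is simultaneously a strict equivalence via the pair $(S^{t},S)$. The invariant factors $i_m(\mu,\la)$ are defined via greatest common divisors of the $m\times m$ minor determinants of the pencil, and such GCDs are unchanged under left and right multiplication by invertible matrices; hence the elementary divisors of $\mu A+\la B$ and $\mu C+\la D$ coincide. For the minimal indices, observe that $(A+\la B)x(\la)=0$ if and only if $(C+\la D)\bigl(S^{-1}x(\la)\bigr)=0$, and $x\mapsto S^{-1}x$ is a $\K[\la]$-linear isomorphism of the kernels preserving degrees, so the minimal indices match as well.

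For the converse, recall the Kronecker canonical form theorem: two pencils of the same size over $\K$ are strictly equivalent if and only if they have the same elementary divisors and the same sequences of left and right minimal indices. Now if $\mu A+\la B$ is skew-symmetric, then $(\mu A+\la B)^{t}=-(\mu A+\la B)$, so transposition sends this pencil to a scalar multiple of itself and hence preserves all Kronecker invariants. Since transposition interchanges the roles of left and right minimal indices, these two sequences must coincide for a skew-symmetric pencil, and the full Kronecker invariant collapses to the elementary divisors together with a single sequence of minimal indices. Consequently, two skew-symmetric pencils of the same dimension having the same elementary divisors and the same minimal indices are strictly equivalent. Applying the cited theorem that over an algebraically closed field of characteristic $\ne 2$ (see \cite{GAU}) or over $\R$ (see \cite{LAR}) strict equivalence of skew-symmetric pencils coincides with strict congruence, we upgrade strict equivalence to strict congruence, completing the proof.

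The main obstacle is the final upgrade from strict equivalence to strict congruence. Everything else is bookkeeping with canonical invariants; by contrast, this step is the substantive classical input, and it is crucial that it fails in general (for example over fields of characteristic $2$ or over more general base fields), which is why the hypothesis on $\K$ appears. In a self-contained write-up one would have to reconstruct the normal form of a skew-symmetric pencil explicitly, verifying that each block of Kronecker's canonical form can be realized by a congruence rather than merely an equivalence; since this is available in the literature, our argument simply invokes it.
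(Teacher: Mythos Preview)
The paper does not actually supply a proof of this proposition; it records it as a well-known result and cites Corollary~6.6 of \cite{GAU} and Theorem~3.4 of \cite{LET}. Your write-up is a correct reconstruction of the standard argument behind those references: congruence is a special case of equivalence, so the invariants are preserved; conversely, for a skew pencil the left and right Kronecker minimal indices agree (via $(\mu A+\la B)^{t}=-(\mu A+\la B)$), so equal invariants force strict equivalence by Kronecker, and then the cited equivalence-implies-congruence results over algebraically closed fields of characteristic $\neq 2$ and over $\R$ finish the job. Since the paper simply defers to the literature, your sketch is not in tension with it but rather fills in what the citations contain; there is nothing to correct.
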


For a skew pencil $\mu A+\la B$ over an algebraically closed field $\K$ the elementary divisors occur in pairs and we can arrange them as
\begin{align*}
\mu^{e_1},\mu^{e_1}, & \cdots , \mu^{e_s},\mu^{e_s}, \\
(\la-\mu \al_1)^{f_1}, (\la-\mu \al_1)^{f_1}, & \cdots ,  (\la-\mu \al_t)^{f_t}, (\la-\mu \al_t)^{f_t} 
\end{align*}
where $\al_1, \, \al_2, \ldots, \al_t\in \K$. 

When $\K=\R$, the elementary divisors still occur in pairs, and apart from the above set of elementary 
divisors (where of course $\al_1,\ldots ,\al_t\in \R$), we can also have pairs of the form 
\[
\xi (a_1,b_1)^{m_1},\xi (a_1,b_1)^{m_1}, \cdots , \xi (a_p,b_p)^{m_p},\xi (a_p,b_p)^{m_p},  
\]
where $a_1,\ldots ,a_p\in \R$, $b_1,\ldots ,b_p\in \R^{\times}$ and $\xi(a,b)=(\la-\mu(a+ib))(\la-\mu(a-bi))$
with $a,b\in \R$.

Since elementary divisors occur in pairs, we introduce a notation to indicate such pairs. Let $\K$ be an algebraically closed
field or $\K=\R$.
For given $\al\in \K\cup \{ \infty\}$ or $\alpha \in \C\cup \{\infty\}$ in case $\K=\R$ and $e\in \N$ we denote by
$(\al,e)$ the following pairs of elementary divisors
\[
(\al,e):=\begin{cases}
\mu^e, \mu^e & \text{ if } \alpha = \infty \\
(\lambda - \mu \alpha)^e, (\lambda - \mu \alpha)^e & \text{ if } \alpha \in \K\\
\xi(a,b)^e,\xi(a,b)^e  & \text{ if $\K=\R$ and }\alpha = a + b i \in \C \setminus \R.
\end{cases}    
\]  

Hence for a skew pencil $\mu A+\la B$ over an algebraically closed field $\K$ we can associate in a unique way a set of
elementary divisors as follows:
\[
(\infty,e_1),  \ldots , (\infty, e_s), \; (\al_1,f_1),  \ldots , (\al_t,f_t)\]
with $\al_1, \ldots, \al_t \in \K$ and for a skew pencil $\mu A+\la B$ over $\R$ we find a set of elementary divisors of the form
\[
(\infty,e_1),  \ldots , (\infty, e_s), \; (\al_1,f_1),  \ldots , (\al_t,f_t), \; (\be_1,m_1), \ldots , (\be_p,m_p),
\]  
where $\al_i\in \R$ and $\be_i\in \C\setminus \R$.

For a giving pair of elementary divisors $(\al,e)$ as above or a minimal index $\ep$, there exists a canonical skew pencil
having exactly that one pair of elementary divisors $(\al,e)$ (and no minimal indices or other elementary divisors) or 
having no elementary divisors and exactly one minimal index $\ep$. \\[0.2cm]
These skew pencils are given by the following cases: \\[0.2cm]
{\em Case 1:} For $(\al,e)=(\infty,e)$ the skew pencil is given by
\[
F(\infty,e):= \begin{pmatrix}
        0 & \mu \Delta_e + \lambda \Lambda_e\\
        -\mu \Delta_e - \lambda \Lambda_e & 0
        \end{pmatrix} \in M_{2e}(\K[\mu, \lambda]),
\]
where
\[
\Delta_e = \begin{pmatrix}
        & & & & 1 \\
        & & & 1 & \\
        & & \iddots & & \\
        & 1 & & & \\
        1 & & & &
      \end{pmatrix},\quad
      \Lambda_e = \begin{pmatrix}
        & & & & 0 \\
        & & & 0 & 1 \\
        & & \iddots & 1 & \\
        & 0 & \iddots & & \\
        0 & 1 & & & 
        \end{pmatrix} \in M_e(\K).
\]
{\em Case 2:} For $(\al,f)=(\la-\mu \al)^f, (\la-\mu\al)^f$ the skew pencil is given by
\[
F(\alpha,f) := \begin{pmatrix}
       0 & (\lambda - \mu \alpha) \Delta_f + \mu \Lambda_f \\
        -(\lambda - \mu \alpha) \Delta_f - \mu \Lambda_f & 0
        \end{pmatrix}  \in M_{2f}(\K[\mu, \lambda]). 
\]
{\em Case 3:} Only in case $\K=\R$ and for $(\al,m)=(a+b i, m)=\xi(a,b)^m,\xi(a,b)^m$ (with $a+b i \in \C\setminus \R$)
the real skew pencil is given by
\[
C(a,b,m):=\begin{pmatrix} 0 & T_m \\ -T_m & 0 \end{pmatrix}\in M_{4m}(\R[\mu,\la]),
\]
where
\[
T_m=  
 \begin{pmatrix}
        & & & 0 & R \\
        & & \iddots & R & \mu \Delta_2 \\
        & & \iddots & & & \\
        0 & R & \iddots & & & \\
        R & \mu \Delta_2 & & & 
        \end{pmatrix} \in M_{2m}(\R[\mu, \lambda])
\]  
for $m\ge 2$ and
\[
T_1=R= \begin{pmatrix} - \mu b & \lambda - \mu a \\ \lambda - \mu a & \mu b \end{pmatrix} \in M_2(\R[\mu,\la]).
\]  
{\em Case 4:} For each minimal index $\ep\geq 1$ the skew pencil is given by
\[
M_\varepsilon = \begin{pmatrix}
        0_{\varepsilon + 1} & \mathcal{L}_\varepsilon(\mu, \lambda) \\
        -\mathcal{L}_\varepsilon(\mu, \lambda)^t & 0_\varepsilon
        \end{pmatrix} \in M_{2 \varepsilon + 1}(\K[\mu, \lambda]), 
\]  
where
\[
\mathcal{L}_\varepsilon(\mu, \lambda) = \begin{pmatrix}
        \lambda & 0 & \ldots & 0 \\
        \mu  & \lambda & & 0 \\
        & \ddots & \ddots & \\
        0 & & \mu  & \lambda \\
        0 & \ldots & 0 & \mu 
        \end{pmatrix} \in M_{\varepsilon + 1, \varepsilon}(\K[\mu, \lambda]).
\]  
For minimal index $\ep=0$, the skew pencil is just $M_0=(0)$, the $1\times 1$ zero matrix.

\medskip

Using the notations of above we have the following result, see \cite{GAU,LET}.

\begin{prop}\label{canon-form}
Let $\K$ be an algebraically closed field of characteristic not 2. Any skew pencil over $\K$ with elementary divisors 
$(\infty,e_1),  \ldots , (\infty, e_s), \; (\al_1,f_1),  \ldots , (\al_t,f_t)$
and minimal indices $\ep_1,\ldots ,\ep_k$ is strictly congruent to the pencil consisting of a matrix with the blocks
\[
F(\infty,e_1),  \ldots ,F(\infty,e_s),\; F(\al_1,f_1),\ldots ,F(\al_t,f_t),\; M_{\ep_1},\ldots ,M_{\ep_k}  
\]  
on the diagonal. Any skew pencil over $\R$  having elementary divisors
\[
(\infty,e_1),  \ldots ,\; (\infty, e_s), \; (\al_1,f_1),  \ldots , (\al_t,f_t),\;(a_1 +b_1 i,m_1), \ldots , (a_p+b_p i,m_p),
\]  
and minimal indices $\ep_1,\ldots ,\ep_k$ is strictly congruent to the pencil consisting of a matrix with the blocks
\[F(\infty,e_1), \ldots ,F(\infty,e_s),\; F(\al_1,f_1),\ldots ,F(\al_t,f_t),\]
\[C(a_1,b_1,m_1), \ldots ,C(a_p,b_p,m_p),\; M_{\ep_1},\ldots ,M_{\ep_k}  
\]
on the diagonal. 
\end{prop}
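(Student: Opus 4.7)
The plan is to invoke Proposition \ref{3.7} as the classification criterion: a skew-symmetric pencil is determined up to strict congruence by its elementary divisors and minimal indices. So it suffices to check two things: first, that the elementary divisors and minimal indices of a block-diagonal skew pencil are the disjoint unions of those of its blocks; and second, that each canonical block $F(\infty,e)$, $F(\al,f)$, $C(a,b,m)$ and $M_\varepsilon$ carries exactly the invariants asserted in the proposition and no others.

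The first point is immediate from the definitions. For elementary divisors, the determinantal divisors $G_m(\mu,\la)$ of a block-diagonal matrix factor multiplicatively over the blocks, hence so do the invariant factors $i_m(\mu,\la)$; since $\K[\mu,\la]$ is a UFD, the multiset of elementary divisors is the union of the block multisets. For minimal indices, the null space of $A+\la B$ in $\K[\la]^n$ splits as a direct sum over the blocks, so a maximal $\K[\la]$-independent family of polynomial null vectors is obtained by concatenating maximal families from each block, and the sequence of minimal degrees is just the merged sequence.

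For the second point I would compute each block type directly. For $F(\infty,e)$, left-multiplying the off-diagonal block $\mu\Delta_e+\la\Lambda_e$ by $\Delta_e$ yields $\mu I_e+\la N_e$ with $N_e$ the superdiagonal nilpotent shift; its Smith normal form over $\K[\mu,\la]$ is $\diag(1,\ldots,1,\mu^e)$, and the skew stacking doubles this to the pair $(\infty,e)$. The same manoeuvre on $F(\al,f)$ gives $(\la-\mu\al)I_f+\mu N_f$ with Smith form $\diag(1,\ldots,1,(\la-\mu\al)^f)$, hence the pair $(\al,f)$. For $C(a,b,m)$ an analogous but more elaborate reduction of $T_m$ produces the single non-unit invariant $\xi(a,b)^m$, doubled by the stacking; identifying the $2\times 2$ block $R$ with multiplication by $\la-\mu(a+bi)$ on $\C\cong\R^2$ keeps the bookkeeping manageable. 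Finally for $M_\varepsilon$, solving $(A+\la B)x=0$ directly shows the null space is spanned (up to scalar) by a vector of the form $(1,-\la,\la^2,\ldots,(-\la)^\varepsilon,0,\ldots,0)^t$, of degree $\varepsilon$, while the gcd of its $2\varepsilon\times 2\varepsilon$ minors is a unit, so $M_\varepsilon$ contributes no elementary divisors and exactly one minimal index $\varepsilon$.

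The main obstacle is the Smith normal form reduction of the perturbed pencils $(\la-\mu\al)I+\mu N$ and, above all, of the real block $T_m$: one must carefully track how elementary row and column operations over the polynomial ring interact with the antidiagonal block structure. The cleanest route is to reduce only the upper-right off-diagonal block (the lower-left being forced by skew-symmetry) and to use a staircase ordering of pivots to reach the diagonal form directly, avoiding any brute-force enumeration of the minors $G_m$. Once the invariants of each canonical block are pinned down, the proposition follows from this together with the first point and Proposition \ref{3.7}.
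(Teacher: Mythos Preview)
The paper does not prove this proposition; it simply cites \cite{GAU,LET} for the result. Your overall plan---appeal to Proposition~\ref{3.7} and verify the invariants block by block---is the natural one and would work, but two steps in the sketch need repair.

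First, the claim that ``the determinantal divisors $G_m(\mu,\la)$ of a block-diagonal matrix factor multiplicatively over the blocks'' is false as stated: a nonzero $m\times m$ minor of a block-diagonal pencil is a product of an $i\times i$ minor from one block and an $(m-i)\times(m-i)$ minor from another, so $G_m$ of the whole is a gcd ranging over all such decompositions $i+(m-i)=m$, not a single product. The conclusion you want---that the multiset of elementary divisors of a direct sum is the disjoint union of those of the summands---is true and standard (it is part of the Kronecker--Weierstrass theory), but it does not follow from the one-line argument you give. Second, you speak of the ``Smith normal form over $\K[\mu,\la]$'' of the off-diagonal blocks, but $\K[\mu,\la]$ is a UFD and not a PID, so Smith normal form is not available in general. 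The paper's own treatment (the paragraph preceding Example~\ref{3.4}) handles this by dehomogenising: set $\mu=1$ to work in the PID $\K[\la]$, compute the Smith form there for the finite-type divisors, and then rehomogenise by $e_a(\mu,\la)=\mu^e e_a(\la/\mu)$; the infinite-type divisors $\mu^e$ are detected separately via $\det B=0$. Once recast in that language, your computations for $F(\infty,e)$, $F(\al,f)$ and $M_\ep$ go through cleanly, and the real block $T_m$ is, as you note, the one requiring the most bookkeeping.
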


\begin{defi}\label{3.9}
A $2$-step nilpotent Lie algebra of genus $2$ over an algebraically closed field $\K$ of characteristic not 2 or
$\K=\R$ is called {\em canonical} if its associated skew pencil is of a blocked diagonal form as in
Proposition~\ref{canon-form} above.  
\end{defi}

As an immediate consequence of Proposition~\ref{prop-iso} we obtain the following corollary:
\begin{cor}
Let $\K$ be an algebraically closed field of characteristic not 2 or $\K=\R$.\\
Any $2$-step nilpotent Lie algebra of genus $2$ over $\K$ is isomorphic to a canonical one with the same
elementary divisors and minimal indices. 
\end{cor}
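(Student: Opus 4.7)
The plan is to chain together the three key results established earlier in the section: the strict congruence classification of skew pencils (Proposition~\ref{canon-form}), the correspondence between such pencils and $2$-step nilpotent Lie algebras of genus $2$, and the isomorphism criterion (Proposition~\ref{prop-iso}). The proof should be short and essentially bookkeeping, so the emphasis will be on producing the correct canonical object to compare with.

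First I would start with a given $2$-step nilpotent Lie algebra $\Lg$ of genus $2$ over $\K$. Choose any basis $(y_1,y_2)$ of the commutator ideal $[\Lg,\Lg]$ and extend by $(x_1,\ldots,x_n)$ to a basis of $\Lg$; this is possible because $\dim([\Lg,\Lg])=2$ and, being $2$-step nilpotent, $[\Lg,\Lg]\subseteq Z(\Lg)$. Read off the skew-symmetric structure constant matrices $A,B\in M_n(\K)$ defined by $[x_i,x_j]=a_{ij}y_1+b_{ij}y_2$, forming the associated skew pencil $\mu A+\la B$.

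Next I would apply Proposition~\ref{canon-form} to this skew pencil: it is strictly congruent to the canonical block-diagonal skew pencil $\mu A'+\la B'$ assembled from the blocks $F(\infty,e_i)$, $F(\al_j,f_j)$, (in the real case also $C(a_l,b_l,m_l)$), and $M_{\ep_r}$ corresponding to its elementary divisors and minimal indices. By construction, this canonical pencil has exactly the same elementary divisors and minimal indices as $\mu A+\la B$. Now define $\Lh$ to be the $2$-step nilpotent Lie algebra of genus $2$ with basis $(x_1',\ldots,x_n',y_1',y_2')$ whose brackets are prescribed by $[x_i',x_j']=a_{ij}'y_1'+b_{ij}'y_2'$, where $A'=(a_{ij}')$ and $B'=(b_{ij}')$. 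Since $A'$ and $B'$ are skew-symmetric and $(y_1',y_2')$ span the center (or rather, span a central ideal containing the commutator), this indeed defines a Lie algebra, and by Definition~\ref{3.9} the algebra $\Lh$ is canonical.

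Finally I would invoke Proposition~\ref{prop-iso}: because the pencils $\mu A+\la B$ and $\mu A'+\la B'$ associated to $\Lg$ and $\Lh$ are strictly congruent, $\Lg$ and $\Lh$ are isomorphic. Since $\Lh$ was built to have precisely the elementary divisors and minimal indices of $\mu A+\la B$, the claim follows. There is no real obstacle to overcome — the only subtle point worth checking is that the pencil associated to the Lie algebra $\Lh$ coincides, up to the chosen bases, with the canonical pencil written down in Proposition~\ref{canon-form}, which is immediate from the definitions and the skew-symmetry of the canonical blocks. Thus the corollary reduces to a direct application of the pencil classification together with the isomorphism criterion.
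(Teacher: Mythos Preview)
Your proposal is correct and follows exactly the approach the paper intends: the corollary is stated there as an immediate consequence of Proposition~\ref{prop-iso} (combined implicitly with Proposition~\ref{canon-form}), and you have simply made the two steps explicit. The only point you might add for completeness is that the canonical algebra $\Lh$ you construct still has genus $2$, which follows since strict congruence preserves linear independence of $A$ and $B$.
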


It follows that the computation of $\AID(\Lg)$ for $2$-step nilpotent Lie algebras of
genus $2$ over $\K$ can be reduced to canonical Lie algebras.

\section{Applications to almost inner derivations}

Any almost inner derivation $D$ of a $2$-step nilpotent Lie algebra $\Lg$ maps the center $Z(\Lg)$ to zero and
$\Lg$ to $[\Lg,\Lg]$, see Proposition $2.9$ in \cite{BU55}. Hence the space $\CC(\Lg)$ defined below contains
$\AID(\Lg)$ and is a subalgebra of $\Der(\Lg)$. 

\begin{defi}
Let $\Lg$ be a $2$-step nilpotent Lie algebra over a field $\K$. Then
\[
\CC(\Lg)=\{D\in \End(\Lg)\mid D(Z(\Lg))=0, D(\Lg)\subseteq [\Lg,\Lg]\}
\]
is a subalgebra of $\Der(\Lg)$ with $\AID(\Lg)\subseteq \CC(\Lg)$. It is called the algebra of {\em central derivations}.
\end{defi}

Since $[\Lg,\Lg]\subseteq Z(\Lg)$ for $2$-step nilpotent Lie algebras, any $D\in \CC(\Lg)$ is a derivation
of $\Lg$. \\[0.2cm]
Let us now assume that $\Lg$ be a $2$-step nilpotent Lie algebra of genus $2$. We fix a basis
\[
\{x_1,\ldots ,x_n,y_1,y_2\},
\]
where $y_1,y_2$ spans $[\Lg,\Lg]$. Let $\mu A+\la B$ be the associated pencil.
Every element in $v\in \Lg$ can be written as $v=x+y$ in this basis, where
$x=a_1x_1+\ldots ,a_nx_n$ and $y=b_1y_1+b_2y_2$ for $a_i,b_i\in \K$. For every $D\in \CC(\Lg)$ we have $D(y)=0$ and
$D(x)=d_1(x)y_1+d_2(x)y_2$ for some $d_1,d_2\in \Hom(U,\K)$, where $U=\langle x_1, x_2, \ldots, x_n\rangle$.
Recall that $D\in \AID(\Lg)$ if and only if there exists
a map $\phi_D\colon \Lg\ra \Lg$ such that
\[
D(v)=[v,\phi_D(v)]
\]  
for all $v\in \Lg$. We may assume that $\phi_D(v)=\phi_D(x)\in \langle x_1,\ldots ,x_n\rangle$ for $v=x+y$ as above, i.e.,
that $v=x$ and
\[
\phi_D(x)=c_1(x)x_1+\cdots +c_n(x)x_n
\]  
for some $c_i(x)\in \K$ for $i=1,\ldots ,n$. Let
\begin{align*}
c(x) & =(c_1(x),\ldots ,c_n(x))^t, \\
a(x) & = (a_1,\ldots ,a_n)^t,\\
L(x) & = \begin{pmatrix} a(x)^t  A \\ a(x)^t  B \end{pmatrix}\in M_{2,n}(\K),\\
d(x) & = \begin{pmatrix} d_1(x) \\ d_2(x)\end{pmatrix}.
\end{align*}

\begin{lem}\label{4.2}
Let $\Lg$ be a $2$-step nilpotent Lie algebra of genus $2$ over $\K$ with the notations as above. Then a given map
$D\in \CC(\Lg)$ is in $\AID(\Lg)$ if and only if $L(x)c(x)=d(x)$ has a solution in the unknowns $c_i(x)$, $i=1,\ldots ,n$
for all $x=a_1x_1+\cdots +a_nx_n$ in $\Lg$. 
\end{lem}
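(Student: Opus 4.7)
The plan is to verify the lemma by a direct unraveling of the definition of almost inner derivation in the chosen basis, turning the identity $D(v)=[v,\phi_D(v)]$ into a pointwise linear system.

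First I would fix $v=x+y\in\Lg$ with $x=\sum_i a_ix_i$ and $y=b_1y_1+b_2y_2$, and explain why it suffices to look for $\phi_D$ of the stated shape. Since $[\Lg,\Lg]\subseteq Z(\Lg)$, both $y$ and the $y_1,y_2$-components of any $\phi_D(v)$ are central, hence contribute nothing to the bracket $[v,\phi_D(v)]$; this lets me replace $\phi_D(v)$ by its projection onto $\langle x_1,\ldots,x_n\rangle$ and also replace $v$ by its projection $x$, so that
\[
D(v)=D(x)=d_1(x)y_1+d_2(x)y_2 \quad\text{and}\quad [v,\phi_D(v)]=[x,\phi_D(x)].
\]
So $D$ is almost inner if and only if, for every $x\in U:=\langle x_1,\ldots,x_n\rangle$, there exist scalars $c_1(x),\ldots,c_n(x)\in\K$ with $D(x)=[x,\sum_i c_i(x)x_i]$.

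Second, I would expand the bracket using the structure constants. Bilinearity and $[x_j,x_i]=a_{ji}y_1+b_{ji}y_2$ give
\[
[x,\phi_D(x)] = \sum_{i,j} a_j\,c_i(x)\,[x_j,x_i]
= \Bigl(\sum_{i,j} a_j a_{ji} c_i(x)\Bigr)y_1 + \Bigl(\sum_{i,j} a_j b_{ji} c_i(x)\Bigr)y_2.
\]
Reading off the coefficients, the $y_1$-coefficient is $a(x)^tA\,c(x)$ and the $y_2$-coefficient is $a(x)^tB\,c(x)$. Matching these with $d_1(x)$ and $d_2(x)$ is exactly the system $L(x)c(x)=d(x)$.

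Finally I would combine the two implications. If $D\in\AID(\Lg)$, then for each $x$ the required $c_i(x)$ exist by definition of almost inner, and the computation above shows they satisfy $L(x)c(x)=d(x)$. Conversely, if $L(x)c(x)=d(x)$ is solvable for every $x$, one defines $\phi_D(x):=\sum_i c_i(x)x_i$ (and extends by $\phi_D(x+y):=\phi_D(x)$) to recover $D(v)=[v,\phi_D(v)]$ for all $v\in\Lg$; note that $\phi_D$ need not be linear, but the definition of $\AID(\Lg)$ only requires existence of a pointwise choice.

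I expect no real obstacle beyond bookkeeping; the one place where care is needed is the reduction at the start, namely justifying that both $v$ and $\phi_D(v)$ may be taken modulo the center without changing the bracket, so that the problem becomes one of linear algebra in the $n$ unknowns $c_i(x)$ parametrized by the point $x\in U$.
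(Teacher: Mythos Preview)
Your proposal is correct and follows essentially the same approach as the paper's own proof: unwind the definition of almost inner, expand the bracket $[x,\phi_D(x)]$ using the structure constants, and read off the system $L(x)c(x)=d(x)$. The only difference is that you spell out more carefully the reduction modulo the center and the construction of $\phi_D$ for the converse, which the paper treats implicitly in the sentence preceding the lemma.
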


\begin{proof}
We have $D\in \AID(\Lg)$ if and only if for all $x=a_1x_1+\cdots +a_nx_n$ there exists
a map $\phi_D\colon \Lg\ra \Lg$ such that $D(x)=[x,\phi_D(x)]$, i.e., if and only if we can find $c_i(x)$ for $i=1,\ldots ,n$
such that
\[
D(x)=\left[ \sum_{i=1}^n a_ix_i, \sum_{j=1}^n c_j(x)x_j\right]=\sum_{i=1}^n \sum_{j=1}^na_ic_j(x)(a_{ij}y_1+b_{ij}y_2).
\]
This is equivalent to the system of linear equations $L(x)c(x)=d(x)$ for all $x=a_1x_1+\cdots +a_nx_n$.
\end{proof}  

We obtain the following result.

\begin{prop}\label{4.3}
Let $\Lg$ be a $2$-step nilpotent Lie algebra of genus $2$ over $\K$ with associated pencil $\mu A+\la B$ and
the notations as above.  Then a given map $D\in \CC(\Lg)$ is in $\AID(\Lg)$ if and only if for all
$x=a_1x_1+\cdots +a_nx_n$ in $\Lg$ and all $\la,\mu \in \K$ we have the condition
\begin{align}\label{c1}
(\mu A+\la B)a(x)=0 & \Longrightarrow \mu d_1(x)+\la d_2(x)=0.
\end{align}  
\end{prop}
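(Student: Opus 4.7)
The plan is to reduce the statement to a Fredholm-alternative argument applied to the $2 \times n$ system $L(x)c(x) = d(x)$ provided by Lemma~\ref{4.2}.

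By Lemma~\ref{4.2}, $D \in \AID(\Lg)$ is equivalent to the solvability of $L(x)c(x) = d(x)$ in the unknown $c(x) \in \K^n$ for every $x = a_1x_1 + \cdots + a_nx_n$. Over the field $\K$, the linear system $L(x) c = d(x)$ has a solution if and only if $d(x)$ is orthogonal to the left kernel of $L(x)$, i.e., if and only if every row vector $(\mu, \la) \in \K^2$ satisfying $(\mu, \la)L(x) = 0$ also satisfies $(\mu, \la) d(x) = 0$. So the main step is to identify this left-kernel condition with the annihilation condition appearing on the left-hand side of (\ref{c1}).

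For this, I would compute $(\mu, \la) L(x)$ directly from the definition of $L(x)$: it equals
\[
\mu\, a(x)^t A + \la\, a(x)^t B = a(x)^t (\mu A + \la B).
\]
Using that $A$ and $B$ are skew-symmetric, one then has
\[
a(x)^t (\mu A + \la B) = -\bigl((\mu A + \la B) a(x)\bigr)^t,
\]
so $(\mu, \la) L(x) = 0$ if and only if $(\mu A + \la B) a(x) = 0$. Since $(\mu, \la) d(x) = \mu d_1(x) + \la d_2(x)$, the solvability condition of $L(x)c(x) = d(x)$ translates precisely into the implication (\ref{c1}) for that particular $x$. Quantifying over all $x \in \Lg$ yields the proposition.

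The only mildly subtle point is the reduction via Fredholm's alternative, which in this finite-dimensional setting is just the standard identity $\im(L(x)) = \ker(L(x)^t)^\perp$; no deeper analysis is required. The skew-symmetry of $A$ and $B$ plays an essential role only in converting the row-vector condition $(\mu, \la) L(x) = 0$ into the matrix-pencil-annihilation condition $(\mu A + \la B) a(x) = 0$, which is the form needed to apply Weierstrass--Kronecker theory in the later sections.
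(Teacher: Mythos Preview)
Your proof is correct and follows essentially the same approach as the paper: the paper phrases the solvability of $L(x)c(x)=d(x)$ via the rank of the augmented matrix $(L(x)\mid d(x))$, while you phrase it via the Fredholm alternative (left kernel of $L(x)$ must annihilate $d(x)$), but these are the same linear-algebra fact, and both arguments then use the skew-symmetry of $A$ and $B$ to identify $(\mu,\la)L(x)=0$ with $(\mu A+\la B)a(x)=0$.
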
  

\begin{proof}
Suppose that $D\in \AID(\Lg)$ and  $(\mu A+\la B)a(x)=0$. Since  $A$ and $B$ are skew-symmetric, we have
\[  
 0=(\mu A+\la B)a(x)=-(\mu a(x)^tA+\la a(x)^tB)^t,
\]
so that $\mu a(x)^tA+\la a(x)^tB=0$. Now $a(x)^tA$ is the first row of $L(x)$ and $a(x)^tB$ the second row of it.
Since $D\in \AID(\Lg)$, both $L(x)$ and the extended matrix $(L(x)\mid d(x))$ have the same rank by Lemma $\ref{4.2}$.
Hence for any linear combination of rows of $L(x)$ which equals zero, the same linear combination of rows of $(L(x)\mid d(x))$
equals zero. Hence $\mu d_1(x)+\la d_2(x)=0$. The converse direction follows similarly.
\end{proof}  

We will apply this result to Example $\ref{3.6}$:

\begin{ex}\label{4.4}
Let $\Lg$ be the $7$-dimensional real Lie algebra with basis $\{x_1,\ldots ,x_5,y_1,y_2\}$ and Lie brackets defined by
\begin{align*}
[x_1,x_3] & = y_1, \; [x_2,x_4]=y_1,\\
[x_1,x_4] & = y_2, \; [x_2,x_5]=y_2. 
\end{align*}
Let $D\in \CC(\Lg)$ and $x=a_1x_1+\cdots +a_5x_5$. Then the matrix of $D$ is given by
\[
D=
\begin{pmatrix}
 \bigzero  & \rvline & \bigzero \\
\hline
\begin{matrix} r_1 & r_2 & r_3 & r_4 & r_5 \\
 s_1 & s_2 & s_3 & s_4 & s_5 \end{matrix}
  & \rvline & \bigzero
 \end{pmatrix}
\]
and $d_1(x)=a_1r_1+\cdots +a_5r_5$, $d_2(x)=a_1s_1+\cdots +a_5s_5$. Condition \eqref{c1} then yields that $D\in \AID(\Lg)$ if
and only if $r_5=s_3=0$ and $s_5=r_4$, $s_4=r_3$. Thus we have $\dim (\AID(\Lg))=6$ and $\dim \Inn(\Lg)=5$.
\end{ex}  
In fact, the kernel of $\mu A+\la B$ for this Lie algebra is $1$-dimensional for all $(\mu,\la)\neq (0,0)$, generated by
$a(x)=(0,0,\la^2,-\mu\la,\mu^2)^t$. Therefore condition \eqref{c1} applied to this vector yields 
\begin{align*}
  0 & =\mu d_1(x)+\la d_2(x) \\
    & = \mu(a_1r_1+\cdots +a_5r_5)+\la(a_1s_1+\cdots +a_5s_5) \\
    & = \mu^3r_5 + \mu^2\la( s_5 - r_4) + \mu\la^2(r_3 - s_4) + \la^3s_3.
\end{align*}  
for all $\la,\mu\in \K$, which shows the claim.
 
\begin{cor}\label{4.5}
Let $\Lg$ be a $2$-step nilpotent Lie algebra of genus $2$ over $\R$, whose associated pencil $\mu A+\la B$ satisfies
$\det(\mu A+\la B)=0$ if and only if $\mu=\la=0$. Then $\AID(\Lg)=\CC(\Lg)$ and 
$\dim (\AID(\Lg))=2\dim (\Inn (\Lg))=2n$.  
\end{cor}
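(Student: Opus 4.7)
The plan is to first use the non-degeneracy hypothesis to pin down the center $Z(\Lg)$ and compute $\dim \CC(\Lg)$, and then apply Proposition~\ref{4.3} to show that every central derivation is almost inner.

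The hypothesis that $\det(\mu A + \la B) = 0$ only for $(\mu,\la) = (0,0)$ implies in particular $\det A \neq 0$ (take $\mu = 1,\ \la = 0$). For an element $v = \sum a_i x_i + b_1 y_1 + b_2 y_2$ to be central, the brackets $[v, x_j]$ must vanish for all $j$, which (using skew-symmetry of $A$ and $B$) amounts to $Aa = Ba = 0$ where $a = (a_1,\ldots,a_n)^t$. Invertibility of $A$ forces $a = 0$, so $Z(\Lg) = \langle y_1, y_2\rangle = [\Lg,\Lg]$ is $2$-dimensional. Consequently $\dim \Inn(\Lg) = \dim \Lg - \dim Z(\Lg) = n$. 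Since $[\Lg,\Lg] \subseteq Z(\Lg)$, any $D \in \CC(\Lg)$ is automatically a derivation, and such $D$ is determined freely by the values $D(x_i) \in \langle y_1, y_2\rangle$ for $i=1,\ldots,n$, giving $\dim \CC(\Lg) = 2n$.

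For $\AID(\Lg) = \CC(\Lg)$, the inclusion $\AID(\Lg) \subseteq \CC(\Lg)$ is general. For the reverse, I would check the criterion in Proposition~\ref{4.3}: fix $D \in \CC(\Lg)$ and $x = \sum a_i x_i$ satisfying $(\mu A + \la B)a(x) = 0$. If $a(x) = 0$ then $x = 0$ and the conclusion $\mu d_1(x) + \la d_2(x) = 0$ is immediate. Otherwise the identity $(\mu A + \la B)a(x) = 0$ forces $\mu A + \la B$ to be singular, and by hypothesis this occurs only when $(\mu,\la) = (0,0)$, in which case the conclusion again holds trivially. Thus (\ref{c1}) is vacuously satisfied and $D \in \AID(\Lg)$. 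There is essentially no obstacle here; the whole point is that the global non-degeneracy of the pencil makes the premise of implication~(\ref{c1}) vacuous away from the origin, which simultaneously collapses $Z(\Lg)$ onto $[\Lg,\Lg]$ and trivializes the almost-inner condition.
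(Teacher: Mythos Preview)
Your proof is correct and follows essentially the same approach as the paper: both use Proposition~\ref{4.3} and observe that the determinant hypothesis makes the premise of condition~\eqref{c1} vacuous (forcing either $a(x)=0$ or $(\mu,\la)=(0,0)$), and both deduce $Z(\Lg)=[\Lg,\Lg]$ to compute the dimensions. Your case split between $a(x)=0$ and $a(x)\neq 0$ is in fact slightly more careful than the paper's phrasing, which glosses over the $(\mu,\la)=(0,0)$ case.
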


\begin{proof}
By assumption the system $(\mu A+\la B)a(x)=0$ only has the trivial solution $x=0$. Hence
condition \eqref{c1} is satisfied and it follows $\AID(\Lg)=\CC(\Lg)$ from Proposition $\ref{4.3}$.
Since the assumptions imply that $[\Lg,\Lg]=Z(\Lg)$ we have
$\dim \Inn (\Lg)=n$, spanned by $\ad(x_1),\ldots ,\ad(x_n)$, and $\dim \CC(\Lg)=2n$. It follows that
 $\dim (\AID(\Lg))=2n$.
\end{proof}  

We can apply this corollary to Example $\ref{3.4}$ with $n=4$.

\begin{ex}\label{4.6}
Let $\Lg$ be the Lie algebra over a field $\K$ with basis $\{x_1,\ldots ,x_4,y_1,y_2\}$ and Lie brackets defined by
\begin{align*}
[x_1,x_2] & = y_1, \; [x_2,x_4]=y_2,\\
[x_1,x_3] & = y_2, \; [x_3,x_4]=-y_1. 
\end{align*}
Then for $\K=\R$, we have that $\det(\mu A +\la B) = (\mu^2+\la^2)^2 =0$ if and only if $\mu=\la=0$, so
that $\AID(\Lg)=\CC(\Lg)$ and $\dim (\AID(\Lg))=2\cdot \dim (\Inn(\Lg))=8$.
However, we have  $\AID(\Lg)=\Inn(\Lg)$ for $\K$ being an algebraically closed field of characteristic not 2.
This will follow from our main result Theorem $\ref{5.11}$.
\end{ex}

\section{Almost inner derivations of Lie algebras of genus 2}

In this section we determine the algebra $\AID(\Lg)$ for canonical Lie algebras in the sense of Definition
$\ref{3.9}$ over $\K$, where $\K$ is either $\R$ or an algebraically closed field of characteristic not 2.
We will start with the case that the canonical pencil only consists of one block.

\begin{defi}
Let $\Lg$ be a Lie algebra with a given basis $\CB$. We say that a linear map $D\colon \Lg\ra \Lg$ is
{\em $\CB$-almost inner}, if $D(x)\in [x,\Lg]$ for all basis elements $x\in \CB$.  
\end{defi}  

A derivation, which is $\CB$-almost inner for some basis $\CB$ need not be almost inner.

\begin{lem}\label{5.2}
Let $\Lg$ be a canonical Lie algebra over $\K$ with one pair of elementary divisors $(\infty,e)$. Then we have
$\dim (\Inn(\Lg))=2e$ and $\dim (\AID(\Lg))=4e-2$.
\end{lem}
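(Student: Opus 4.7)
The plan is to unpack the bracket relations encoded by $F(\infty,e)$, compute $Z(\Lg)$ directly to obtain $\dim\Inn(\Lg)$, and then apply Proposition~\ref{4.3} to pin down $\dim\AID(\Lg)$ via the kernel of the pencil.

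Splitting the basis of $U := \langle x_1,\ldots,x_{2e}\rangle$ into a left half $\{x_1,\ldots,x_e\}$ and a right half $\{x_{e+1},\ldots,x_{2e}\}$, the block shape of $F(\infty,e)$ kills all brackets within each half, while the anti-diagonal patterns of $\Delta_e$ and $\Lambda_e$ produce
\[
[x_i,\, x_{e+j}] \;=\; \delta_{i+j,\,e+1}\,y_1 \;+\; \delta_{i+j,\,e+2}\,y_2 \qquad (1\le i,j\le e).
\]
For $x = \sum_i a_i x_i$ to be central, working through $[x,x_k]=0$ for $k=1,\ldots,e$ gives a triangular system that forces $a_{e+1},\ldots,a_{2e}$ to vanish one at a time; symmetrically $[x,x_{e+k}]=0$ kills $a_1,\ldots,a_e$. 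Hence $Z(\Lg) = \langle y_1,y_2\rangle$ and $\dim\Inn(\Lg) = \dim\Lg - \dim Z(\Lg) = 2e$.

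Any $D \in \CC(\Lg)$ is determined by the pair of linear functionals $d_1,d_2\colon U\to\K$, so $\dim\CC(\Lg) = 4e$. To locate $\AID(\Lg)$ inside $\CC(\Lg)$ I would invoke Proposition~\ref{4.3} and analyse $\ker(\mu A + \la B) \subseteq \K^{2e}$. Writing
\[
\mu A + \la B \;=\; \begin{pmatrix} 0 & M\\ -M & 0\end{pmatrix}, \qquad M := \mu\Delta_e + \la\Lambda_e,
\]
the kernel equals $\ker M \oplus \ker M$ under the splitting $\K^{2e} = \K^e\oplus\K^e$. Observing that $\Delta_e\Lambda_e$ is the standard nilpotent Jordan block $N$ with $1$'s on the superdiagonal, the product $\Delta_e M = \mu I + \la N$ is upper-triangular with $\mu$ along the diagonal, so $\det M = \pm\mu^e$ and $\ker M = 0$ whenever $\mu\neq 0$. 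At $\mu = 0,\,\la\neq 0$ we have $M = \la\Lambda_e$, and a direct inspection (rows $2,\ldots,e$ of $\Lambda_e$ have pivots in columns $e,e-1,\ldots,2$) gives $\ker\Lambda_e = \langle e_1\rangle$.

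Consequently the nontrivial instances of condition~\eqref{c1} arise only at $\mu = 0,\,\la\neq 0$ with $a(x)\in\langle e_1, e_{e+1}\rangle$, i.e.\ $x\in\langle x_1, x_{e+1}\rangle$; the criterion then collapses to the two independent linear conditions $d_2(x_1) = 0$ and $d_2(x_{e+1}) = 0$ on $\CC(\Lg)$. This yields $\dim\AID(\Lg) = \dim\CC(\Lg) - 2 = 4e - 2$. The only delicate step is the kernel computation for $M$ (in particular verifying via the $\Delta_e$-factorisation that values $\mu\neq 0$ contribute no extra constraints); everything else is bookkeeping.
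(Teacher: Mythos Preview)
Your proof is correct and follows essentially the same route as the paper: both compute the kernel of the pencil $F(\infty,e)$ and invoke Proposition~\ref{4.3} to reduce membership in $\AID(\Lg)$ to the two linear constraints $d_2(x_1)=0$ and $d_2(x_{e+1})=0$. The paper phrases the upper bound via the auxiliary notion of $\CB$-almost inner derivations (observing that $D_{1,2}$ and $D_{e+1,2}$ fail this condition) and then checks condition~\eqref{c1} for the remaining $4e-2$ derivations, whereas you extract the same two constraints directly from the kernel analysis; your version is a bit more streamlined, and your explicit verification that $Z(\Lg)=\langle y_1,y_2\rangle$ fills in a step the paper leaves implicit.
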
  

\begin{proof}
By assumption the matrix pencil of $\Lg$ is $F(\infty,e)$, so that the Lie brackets of $\Lg$ in the usual basis
$\CB$ are given by
\begin{align*}
[x_i,x_{2e+1-i}]& = y_1,\; 1\le i\le e,\\
[x_j,x_{2e+2-j}]& = y_2,\; 2\le j\le e.  
\end{align*}
We have $\dim (\Inn(\Lg))=2e$. We will compute $\AID(\Lg)$ by Proposition $\ref{4.3}$.
A basis for $\CC(\Lg)$ is given by the maps $D_{i,j}:\Lg\ra \Lg$ for $1\le i\le 2e$ and $j=1,2$ defined by
\[
\sum_{k=1}^{2e} a_kx_k+(b_1y_1+b_2y_2) \mapsto a_iy_j.
\]
We have $\dim \CC(\Lg)=4e$. 
It is easy to see that all $D_{i,j}$ are $\CB$-almost inner except for $D_{1,2}$ and $D_{e+1,2}$.
It is easy to see that the span of $D_{1,2}$ and $D_{e+1,2}$ has only trivial intersection with $\Inn(\Lg)$.
Then we have $\dim \AID(\Lg)=4e-2$ if we can show that all of the remaining $4e-2$ derivations are actually almost inner. \\
Let $D\in \CC(\Lg)$ be $\CB$-almost inner. We have $\det(\mu A+\la B)=\mu^{2e}$. For $\mu\neq 0$ condition \eqref{c1} is
satisfied, so that we may assume that $\mu=0$. Then the kernel of $\mu A+\la B=\la B=F(\infty,e)$ is equal to the
set of all vectors $a(x)=(k_1,0,\ldots ,0,k_{e+1},0,\ldots ,0)^t$ with $k_1,k_{e+1}\in \K$. For these vectors we have $d_2(a(x))=0$,
so that condition  \eqref{c1} is satisfied and the proof is finished.
\end{proof}  

\begin{lem}\label{5.3}
Let $\Lg$ be a canonical Lie algebra over $\K$ with one pair of elementary divisors $(\al,f)$. Then we have
$\dim (\Inn(\Lg))=2f$ and $\dim (\AID(\Lg))=4f-2$.
\end{lem}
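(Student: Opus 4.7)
My plan is to follow the blueprint of Lemma~\ref{5.2}. First I would read off the Lie brackets of $\Lg$ from the canonical block $F(\al,f)$; its upper-right block $(\la-\mu\al)\Delta_f + \mu\Lambda_f$ yields
\begin{align*}
[x_i, x_{2f+1-i}] &= -\al y_1 + y_2, \quad 1 \le i \le f, \\
[x_i, x_{2f+2-i}] &= y_1, \quad 2 \le i \le f,
\end{align*}
with all other brackets zero. Since $B$ has invertible upper-right block $\Delta_f$, the equation $Ba(x) = 0$ forces $a(x) = 0$, so $Z(\Lg) = [\Lg,\Lg]$ and $\dim \Inn(\Lg) = 2f$. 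Exactly as in Lemma~\ref{5.2}, $\CC(\Lg)$ has a basis of maps $D_{i,j}$ with $1 \le i \le 2f$ and $j \in \{1,2\}$, so $\dim \CC(\Lg) = 4f$.

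Next I would apply Proposition~\ref{4.3} directly to extract $\AID(\Lg)$ from $\CC(\Lg)$. Writing $\mu A + \la B$ in block anti-diagonal form with upper-right block $M := (\la - \mu\al)\Delta_f + \mu\Lambda_f$, one observes that $M$ is upper anti-triangular with the scalar $\la - \mu\al$ on its entire antidiagonal, so $\det M = \pm(\la - \mu\al)^f$. Hence the pencil has trivial kernel whenever $\la \ne \mu\al$, and condition \eqref{c1} imposes no restriction there. In the remaining degenerate case $\la = \mu\al$ with $\mu \ne 0$, the matrix $M$ specialises to $\mu\Lambda_f$; a direct inspection shows that $\Lambda_f$ is upper anti-triangular with column $1$ identically zero, so $\ker\Lambda_f = \K e_1$. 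Consequently $\ker(\mu A + \la B)$ is the plane spanned by the standard basis vectors $E_1$ and $E_{f+1}$ in $\K^{2f}$.

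Feeding these two kernel generators into \eqref{c1} and dividing by $\mu \ne 0$, the almost inner condition collapses to the two linear equations
\[
d_1(x_1) + \al\, d_2(x_1) = 0, \qquad d_1(x_{f+1}) + \al\, d_2(x_{f+1}) = 0
\]
on $D \in \CC(\Lg)$. These are independent for every $\al$ (in the boundary case $\al = 0$ they become simply $d_1(x_1) = d_1(x_{f+1}) = 0$), so they cut $\CC(\Lg)$ down to a subspace of codimension $2$, yielding $\dim\AID(\Lg) = 4f - 2$.

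The hard part will be the kernel description of $\mu A + \la B$ in the degenerate case; it hinges on recognising $M$ as upper anti-triangular with antidiagonal $(\la-\mu\al)$ and pinning down $\ker\Lambda_f$ as the line through $e_1$. Once this is in hand the rest is routine linear algebra, closely parallel to Lemma~\ref{5.2}. The only notational difference is that here the ``bad'' pairs $\{D_{1,1}, D_{1,2}\}$ and $\{D_{f+1,1}, D_{f+1,2}\}$ are each trimmed by one linear relation rather than a single basis vector being dropped outright, but the final count is the same.
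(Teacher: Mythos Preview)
Your argument is correct: the kernel computation for $\mu A+\la B$ is right (the only degenerate direction is $\la=\mu\al$, where the pencil reduces to $\mu\Lambda_f$ in each block and $\ker\Lambda_f=\K e_1$), and feeding the two-dimensional kernel $\langle E_1,E_{f+1}\rangle$ into condition \eqref{c1} yields exactly the two independent linear constraints you wrote down, cutting $\CC(\Lg)$ to codimension $2$.

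However, the paper takes a much shorter route. Rather than rerunning the Proposition~\ref{4.3} analysis, it simply observes that replacing the basis $(y_1,y_2)$ of $[\Lg,\Lg]$ by $(y_2-\al y_1,\,y_1)$ turns the bracket relations
\[
[x_i,x_{2f+1-i}]=y_2-\al y_1,\qquad [x_j,x_{2f+2-j}]=y_1
\]
into exactly those of the Lie algebra in Lemma~\ref{5.2} (with $e=f$), so the two algebras are isomorphic and the dimension count follows immediately. Your approach has the virtue of being self-contained and of making explicit which central derivations fail to be almost inner for general $\al$ (a genuine linear relation between $D_{i,1}$ and $D_{i,2}$ rather than the vanishing of a single $D_{i,2}$); the paper's change-of-basis trick is more economical and reveals that Lemmas~\ref{5.2} and \ref{5.3} are really one lemma in disguise.
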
  

\begin{proof}
The Lie brackets of $\Lg$ with respect to the usual basis $\{x_1,\ldots ,x_{2f},y_1,y_2\}$ and matrix
pencil $F(\al,f)$ are given by
\begin{align*}
[x_i,x_{2f+1-i}]& =y_2-\al y_1,\; 1\le i\le f,\\
[x_j,x_{2f+2-j}]& =y_1,\; 2\le j\le e.  
\end{align*}
We may pass to the basis $\{x_1,\ldots ,x_{2f},y_2-\al y_1,y_1\}$ so that $\Lg$ coincides with the Lie algebra
of Lemma $\ref{5.2}$. This finishes the proof.
\end{proof}

The next Lemma is only for the case $\K=\R$.

\begin{lem}\label{5.4}
Let $\Lg$ be a canonical Lie algebra over $\R$ with one pair of elementary divisors $(\be,m)$, where $\be=a+bi$
and $b\neq 0$.  Then we have $\dim \Inn(\Lg)=4m$ and $\dim \AID(\Lg)=8m$.
\end{lem}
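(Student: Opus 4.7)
The plan is to apply Corollary~\ref{4.5} directly by verifying that its hypothesis holds for the canonical pencil $C(a,b,m)$ associated to $\Lg$: namely, that $\det(\mu A + \la B) = 0$ only at $\mu = \la = 0$ over $\R$. Once this is established, the corollary supplies both equalities at once with $n = 4m$.

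First I would compute this determinant. Since $C(a,b,m) = \begin{pmatrix} 0 & T_m \\ -T_m & 0 \end{pmatrix}$ with $2m \times 2m$ blocks, one immediately obtains $\det C(a,b,m) = \pm \det(T_m)^2$. The inner block $T_m$ is block anti-triangular in $2 \times 2$ blocks, with $R$ on the block anti-diagonal and zero strictly below it, so reversing the block-row order produces a block upper triangular matrix whose diagonal blocks are all $R$; hence $\det T_m = \pm \det(R)^m$. A direct $2 \times 2$ computation yields
\[
\det R = -b^2\mu^2 - (\la - a\mu)^2 = -\xi(a,b),
\]
so that $\det(\mu A + \la B) = \pm \xi(a,b)^{2m}$.

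Next I would observe that over $\R$ the polynomial $\xi(a,b) = (\la - a\mu)^2 + b^2\mu^2$ is a sum of two real squares. Because $b \neq 0$, it vanishes precisely when $\mu = 0$ and $\la - a\mu = 0$, i.e., when $\mu = \la = 0$. This is exactly the hypothesis of Corollary~\ref{4.5} with $n = 4m$; invoking it gives $\AID(\Lg) = \CC(\Lg)$, $\dim \Inn(\Lg) = 4m$ and $\dim \AID(\Lg) = 8m$, which is the claim.

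The only mild obstacle is the sign bookkeeping in reducing $T_m$ to a block upper triangular form, but since only the vanishing locus of the determinant matters, an induction on $m$---expanding along the first block-row, whose unique nonzero entry is the $R$ at its rightmost position---bypasses any need to track the permutation sign carefully.
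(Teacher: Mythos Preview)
Your proof is correct and follows essentially the same route as the paper: both verify that the determinant of the canonical pencil $C(a,b,m)$ vanishes only at $\mu=\la=0$ over $\R$ and then invoke Corollary~\ref{4.5} with $n=4m$. The only cosmetic difference is that the paper first performs the basis change $y_2:=y_2'-ay_1$, which reduces the determinant to $(\la^2+\mu^2 b^2)^{2m}$, whereas you compute directly with the original pencil and obtain $\xi(a,b)^{2m}=\bigl((\la-a\mu)^2+b^2\mu^2\bigr)^{2m}$; either form visibly has the required nonvanishing property.
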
  

\begin{proof}
The Lie brackets of $\Lg$ with respect to the usual basis $\{x_1,\ldots ,x_{4m},y_1,y_2'\}$ and matrix
pencil $\mu A +\la B=C(a,b,m)$ are given by
\begin{align*}
[x_{2i-1},x_{4m-2i+1}]& =-b y_1,\; 1\le i\le m,\\
[x_{2i},x_{4m-2i+2}] & =by_1, \; 1\le i\le m,\\
[x_j,x_{4m+1-j}]& =y_2'-ay_1,\; 1\le j\le 2m,  
\end{align*}
and in addition, for $m\ge 2$, 
\[
[x_k,x_{4m-k+3}]=y_1,\; 3\le k\le 2m.
\]  
We can pass to a basis $\CB=\{x_1,\ldots ,x_n,y_1,y_2 \}$ by setting $y_2:=y_2'-ay_1$.
Then a basis for $\CC(\Lg)$ is given by the maps
$D_{i,j}:\Lg\ra \Lg$ for $1\le i\le 4m$ and $j=1,2$ defined by
\[
\sum_{k=1}^{4m} a_kx_k+(b_1y_1+b_2y_2) \mapsto a_iy_j.
\]
We have $\dim (\CC(\Lg))=8m$ and $\det(\mu A+\la B)=(\la^2+\mu^2b^2)^{2m}$, so that $\AID(\Lg)=\CC(\Lg)$
over $\R$ by Corollary $\ref{4.5}$.
\end{proof}

Let us again consider Example $\ref{3.4}$.

\begin{ex}\label{5.5}
Let $\Lg$ be the $6$-dimensional Lie algebra over $\R$ with basis $\{x_1,\ldots ,x_4,y_1,y_2\}$ and Lie brackets defined by
\begin{align*}
[x_1,x_2] & = y_1, \; [x_2,x_4]=y_2,\\
[x_1,x_3] & = y_2, \; [x_3,x_4]=-y_1. 
\end{align*}
This is a canonical Lie algebra with one pair of elementary divisors $(\be,m)=(i,1)$. Hence, Lemma \ref{5.4} says that
$\dim (\Inn(\Lg))=4$ and $\dim (\AID(\Lg))=8$ which coincides with what we obtained in Example~\ref{4.6}.
\end{ex}

\begin{lem}\label{5.6}
Let $\Lg$ be a canonical Lie algebra over $\K$ with minimal index $\ep \ge 1$. Then it holds
$\dim (\Inn(\Lg))=2\ep +1$ and $\dim (\AID(\Lg))=3\ep$.
\end{lem}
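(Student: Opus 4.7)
My plan is to follow the template of Lemmas $\ref{5.2}$--$\ref{5.4}$: read off the brackets from the canonical pencil $M_\ep$, compute $Z(\Lg)$ to get $\dim \Inn(\Lg)$, parameterize $\CC(\Lg)$, and then apply the kernel criterion \eqref{c1} of Proposition $\ref{4.3}$. Unpacking $M_\ep$ in the standard basis $\{x_1, \ldots, x_{2\ep+1}, y_1, y_2\}$, the nonzero brackets among the $x_i$ are
\begin{align*}
[x_i, x_{\ep+1+i}] &= y_2, \quad 1 \le i \le \ep, \\
[x_i, x_{\ep+i}]   &= y_1, \quad 2 \le i \le \ep+1.
\end{align*}

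For $\dim \Inn(\Lg)$, I would verify that $Z(\Lg) = \langle y_1, y_2 \rangle$ by direct inspection: for $v = \sum_{i=1}^{2\ep+1} a_i x_i + b_1 y_1 + b_2 y_2$, the brackets $[v, x_j]$ with $j = \ep+2, \ldots, 2\ep+1$ force $a_{j-\ep-1} = a_{j-\ep} = 0$ (hence $a_1 = \cdots = a_{\ep+1} = 0$), and those with $j = 1, \ldots, \ep+1$ symmetrically force $a_{\ep+2} = \cdots = a_{2\ep+1} = 0$. Therefore $\dim Z(\Lg) = 2$ and $\dim \Inn(\Lg) = (2\ep+3) - 2 = 2\ep + 1$.

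For $\dim \AID(\Lg)$, since $[\Lg, \Lg] = \langle y_1, y_2 \rangle$ is $2$-dimensional, every $D \in \CC(\Lg)$ is encoded by the $4\ep+2$ scalars $d_k^{(i)}$ with $D(x_i) = d_1^{(i)} y_1 + d_2^{(i)} y_2$. The main step is to describe the kernel of $\mu A + \la B = M_\ep$: splitting $a(x) = (v_1, v_2) \in \K^{\ep+1} \oplus \K^\ep$, the system $M_\ep a(x) = 0$ reads $\mathcal{L}_\ep v_2 = 0$ and $\mathcal{L}_\ep^t v_1 = 0$. A short recurrence analysis using $\la v_2^{(i)} + \mu v_2^{(i-1)} = 0$ (with $v_2^{(0)} = v_2^{(\ep+1)} = 0$) and $\la v_1^{(i)} + \mu v_1^{(i+1)} = 0$ shows that for every $(\la, \mu) \neq (0,0)$ one has $v_2 = 0$ and $v_1$ is a scalar multiple of
\[
w(\la, \mu) = \bigl( \mu^\ep,\; -\la \mu^{\ep-1},\; \la^2 \mu^{\ep-2},\; \ldots,\; (-\la)^\ep \bigr)^t.
\]

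Substituting $(w(\la, \mu), 0, \ldots, 0)^t$ into condition \eqref{c1} gives the polynomial identity
\[
\sum_{i=1}^{\ep+1} (-\la)^{i-1} \mu^{\ep-i+1} \bigl( \mu\, d_1^{(i)} + \la\, d_2^{(i)} \bigr) = 0 \quad \text{in } \K[\la,\mu],
\]
and matching coefficients of the $\ep+2$ distinct monomials $\la^p \mu^{\ep+1-p}$ ($p = 0, 1, \ldots, \ep+1$) yields exactly the linearly independent conditions $d_1^{(1)} = 0$, $d_1^{(i)} = d_2^{(i-1)}$ for $2 \le i \le \ep+1$, and $d_2^{(\ep+1)} = 0$, with no restriction on the $d_k^{(i)}$ for $i \ge \ep+2$. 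Hence $\dim \AID(\Lg) = (4\ep + 2) - (\ep + 2) = 3\ep$. The hardest step is identifying the kernel of $M_\ep$ explicitly as the $\K$-span of $w(\la, \mu)$; once that is in place, the coefficient matching is mechanical. Example $\ref{4.4}$ offers a sanity check at $\ep = 2$, giving $\dim \Inn(\Lg) = 5 = 2\ep+1$ and $\dim \AID(\Lg) = 6 = 3\ep$.
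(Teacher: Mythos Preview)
Your argument is correct. You apply Proposition~$\ref{4.3}$ in full by computing $\ker(\mu A+\la B)$ explicitly for every $(\mu,\la)\neq(0,0)$, reducing condition~\eqref{c1} to the vanishing of a single homogeneous polynomial of degree $\ep+1$ in $(\la,\mu)$, and reading off the $\ep+2$ independent linear constraints from its coefficients. Since Proposition~$\ref{4.3}$ is an if-and-only-if, this gives both the upper and lower bound on $\dim\AID(\Lg)$ at once. One small point you leave implicit: the passage from ``vanishes for all $(\la,\mu)\in\K^2$'' to ``is the zero polynomial'' uses that $\K$ is infinite, which holds here since $\K$ is $\R$ or algebraically closed.

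The paper's proof follows a different route. For the upper bound $\dim\AID(\Lg)\le 3\ep$, rather than computing the kernel of the pencil, it evaluates a putative $D\in\AID(\Lg)$ on the specific family $\sum_{i=1}^{\ep+1} b^i x_i$ (for varying $b\in\K$) and extracts the same constraints $\al_1=\be_{\ep+1}=0$, $\al_i=\be_{i-1}$ directly from the almost-inner condition. For the lower bound, the paper does not invoke the converse of Proposition~$\ref{4.3}$ at all: it explicitly verifies that each $D_{j,1}$ with $\ep+2\le j\le 2\ep$ is almost inner by exhibiting $\phi_{D_{j,1}}$, and then checks that these span a complement to $\Inn(\Lg)$ inside $\AID(\Lg)$. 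Your approach is more uniform with the proofs of Lemmas~$\ref{5.2}$--$\ref{5.4}$ and avoids the separate lower-bound construction; the paper's approach has the merit of exhibiting concrete non-inner almost inner derivations together with their maps $\phi_D$.
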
  

\begin{proof}
The Lie brackets of $\Lg$ with respect to the usual basis $\CB=\{ x_1,\ldots ,x_{2\ep+1},y_1,y_2\}$ and matrix
pencil $\mu A+\la B=M_{\ep}$ are given by
\begin{align*}
[x_i,x_{i+\ep+1}]& =y_2,\; 1\le i\le \ep,\\
[x_{j+1},x_{j+\ep+1}]& =y_1,\; 1\le j\le \ep.  
\end{align*}
It is easy to see that $Z(\Lg)=\langle y_1, y_2 \rangle$, so we have 
$\dim (\Inn(\Lg))=2\ep+1$. For $\ep=1$ we have $\AID(\Lg)=\Inn(\Lg)$ by Theorem $4.1$ of \cite{BU55}
since $\Lg$ is determined by a graph. For $\ep\ge 2$ a basis of $\CC(\Lg)$ is given by the maps
$D_{i,j}:\Lg\ra \Lg$ for $1\le i\le 2\ep+1$ and $j=1,2$ defined by
\[
\sum_{k=1}^{2\ep+1} a_kx_k+(b_1y_1+b_2y_2) \mapsto a_iy_j.
\]
Hence we have $\dim (\CC(\Lg))=4\ep+2$ and $\AID(\Lg)\subseteq \CC(\Lg)$. Suppose now that
\[
D=\sum_{i=1}^{2\ep+1}\al_iD_{i,1}+\sum_{i=1}^{2\ep+1}\be_iD_{i,2}
\]  
is an element of $\AID(\Lg)$. Then for any $b\in \K$ we have
\begin{align}\label{c2}
D\left(\sum_{i=1}^{\ep+1}b^ix_i\right) & = \sum_{i=1}^{\ep+1}\al_ib^iy_1+\sum_{i=1}^{\ep+1}\be_ib^iy_2.
\end{align}
Since $D\in \AID(\Lg)$ there exist $c_j(b)\in \K$ for all $\ep+2\le j\le 2\ep+1$ such that
\begin{align}\label{c3}
D\left(\sum_{i=1}^{\ep+1}b^ix_i\right) & =\left[\sum_{i=1}^{\ep+1}b^ix_i,\sum_{j=\ep+2}^{2\ep+1}c_j(b)x_j\right]
= \sum_{i=2}^{\ep+1}b^ic_{i+\ep}(b)y_1+\sum_{i=1}^{\ep}b^ic_{i+\ep+1}(b)y_2.                                          
\end{align}  
We also have
\begin{align}\label{c4}
b\left(\sum_{i=1}^{\ep}b^i c_{i+\ep+1}(b)\right) & = \sum_{i=2}^{\ep+1}b^ic_{i+\ep}(b).
\end{align}  
Comparing coefficients of $y_1$ and $y_2$ in \eqref{c2} and \eqref{c3} and using \eqref{c4} we find that
\[
b\sum_{i=1}^{\ep+1}\be_ib^i -\sum_{i=1}^{\ep+1} \al_ib^i=0,
\]
so that
\[
-\al_1b+\sum_{i=2}^{\ep+1}(\be_{i-1}-\al_i)b^i+\be_{\ep+1}b^{\ep+2}=0.
\]  
Since this holds for all $b\in \K$ we obtain
\[
\al_1=0,\; \be_{\ep+1}=0,\; \al_i=\be_{i-1} \text{ for } 2\le i\le \ep+1.
\]  
This means that $\AID(\Lg)$ is contained in the subspace
\[
V=\{D=\sum_{i=1}^{2\ep+1}\al_iD_{i,1}+\sum_{i=1}^{2\ep+1}\be_iD_{i,2}\in \CC(\Lg)\mid \al_1=\be_{\ep+1}=0,\al_i=\be_{i-1}
 \text{ for } 2\le i\le \ep+1 \}.
\]  
Note that $\dim (V)=\dim (\CC(\Lg))-(\ep+2)=4\ep+2-(\ep+2)=3\ep$. Hence $\dim (\AID(\Lg))\le 3\ep$.
We claim that there holds equality. More precisely we will show that each $D_{j,1}$ for $\ep+2\le j\le 2\ep$ is almost
inner. Here we do not consider $D_{2\ep+1,1}$ because it already coincides with the inner derivation $\ad (x_{\ep+1})$
and hence is almost inner.
Let
\[
x = \sum_{i = 1}^{2 \varepsilon + 1} a_i x_i + (b_1 y_1 + b_2 y_2)
\]
be an element in $\Lg$. If $a_j = 0$, then $D_{j,1}(x) = [x,0] = 0$. Otherwise we have 
\[ 
D_{j ,1}(x) = \left[x, \frac{-a_j}{a_\ell} x_{\ell - \varepsilon}\right] = a_j y_1
\]
for $\ell := \max\{j \leq k \leq 2 \varepsilon +1 \mid a_k \neq 0\}$. This shows that $D_{j,1}$ is almost inner for
all $\varepsilon + 2 \leq j \leq 2\varepsilon$. Consider the subspace $W$ of $\AID(\Lg)$ generated by all 
$D_{j,1}$ for $\varepsilon + 2 \leq j \leq 2\varepsilon$. We claim that
\[
W\cap \Inn(\Lg)=0.
\]  
Then we are done. We know that $\dim(\Inn(\Lg))=2\ep+1$ and $\dim (W)=\ep-1$, so that $\Inn(\Lg)\oplus W$ is a
$3\ep$-dimensional subspace of $\AID(\Lg)$. This implies $\dim (\AID(\Lg))\ge 3\ep$ and hence there holds equality.
So assume that $D=\sum_{i=\ep+2}^{2\ep}\al_iD_{i,1}\in W\cap \Inn(\Lg)$ with $D=\ad(x)$ for some $x=\sum_{i=1}^{2\ep+1}k_ix_i$.
We will show that all $k_i=0$ and all $\al_i=0$. Because of
\[
0=\ad(x)(x_j)=D(x_j)=\left[\sum_{i=1}^{2\ep+1}k_ix_i,x_j\right]=-k_{\ep+j+1}y_2-k_{\ep+j}y_1
\]
for $2\le j\le \ep$ we have $k_{\ep+2}=k_{\ep+3}=\cdots =k_{2\ep+1}=0$. 
Also we have
\[
\al_{\ep+j}y_1=D(x_{\ep+j})=[k_1x_1+\cdots +k_{\ep+1}x_{\ep+1},x_{\ep+j}]=k_{j-1}y_2+k_{j}y_1
\]  
for all $2\le j\le \ep+1$, where we take $\al_{2 \ep +1}=0$. Hence all $\al_i$ and all $k_i$ are zero and we have shown
that $W\cap \Inn(\Lg)=0$.
\end{proof}  

\begin{rem} For a minimal index $\ep=0$, the corresponding Lie algebra $\Lg$ is just the abelian 3-dimensional
Lie algebra (with basis $x_1,y_1,y_2$) over $\K$ and so in this case $\Inn(\Lg)=\AID(\Lg)=0$. 
\end{rem}

\begin{ex}\label{5.7}
For $\ep=2$ the canonical Lie algebra $\Lg$ of Lemma $\ref{5.6}$ is isomorphic to the Lie algebra of Example $\ref{4.4}$.
We have $\dim (\Inn(\Lg))=2\ep+1=5$ and $\dim (\AID(\Lg))=3\ep=6$, which coincides with the result of Example $\ref{4.4}$.
\end{ex}  

For the next lemma, let $\Lg$ be a $2$-step nilpotent Lie algebra over an arbitrary field $\K$ with basis
$\{ x_1,\ldots ,x_n,y_1,\ldots ,y_m,z_1,\ldots ,z_p\}$, where the $z_i$ span $[\Lg,\Lg]$. Define Lie subalgebras by
\begin{align*}
\Lg_x & = \langle  x_1,\ldots ,x_n,z_1,\ldots ,z_p \rangle,\\
\Lg_y & = \langle  y_1,\ldots ,y_m,z_1,\ldots ,z_p \rangle.  
\end{align*}  

\begin{lem}\label{5.8}
Let $\Lg$ be a $2$-step nilpotent Lie algebra over a field $\K$ with the above basis such that $[x_i,y_j]=0$
for all $1\le i\le n$ and $1\le j\le m$. Then we have
\[
\dim (\AID(\Lg))=\dim (\AID(\Lg_x))+\dim (\AID(\Lg_y)).
\]  
\end{lem}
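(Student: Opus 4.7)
The plan is to construct a natural restriction map
\[
\Phi\colon \AID(\Lg)\longrightarrow \AID(\Lg_x)\oplus \AID(\Lg_y),\qquad D\longmapsto (D|_{\Lg_x},\,D|_{\Lg_y}),
\]
and to show it is a linear isomorphism. The key preparatory computation is the identity
\[
[v,\Lg]=[v_x,\Lg_x]+[v_y,\Lg_y]\qquad(\star)
\]
for every $v\in\Lg$, where $v=v_x+v_y+v_z$ with $v_x\in\langle x_1,\ldots,x_n\rangle$, $v_y\in\langle y_1,\ldots,y_m\rangle$, and $v_z\in[\Lg,\Lg]$. This follows directly from the hypothesis $[x_i,y_j]=0$ together with $[\Lg,\Lg]\subseteq Z(\Lg)$ (valid since $\Lg$ is $2$-step nilpotent).

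Next I verify that $\Phi$ is well-defined. For any $D\in\AID(\Lg)$ one has $D(\Lg)\subseteq[\Lg,\Lg]\subseteq\Lg_x\cap\Lg_y$, so $D|_{\Lg_x}$ and $D|_{\Lg_y}$ are genuine endomorphisms of $\Lg_x$ and $\Lg_y$. Moreover they remain almost inner, since for $v\in\Lg_x$ we have $v_y=0$ and $(\star)$ reads $[v,\Lg]=[v,\Lg_x]$, so $D(v)\in[v,\Lg_x]$; the argument for $\Lg_y$ is symmetric. Injectivity of $\Phi$ is immediate because the restrictions together determine $D$ on every basis vector (recalling that $D$ kills $Z(\Lg)\supseteq\langle z_1,\ldots,z_p\rangle$).

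For surjectivity, given $(D',D'')\in\AID(\Lg_x)\oplus\AID(\Lg_y)$, I define $D\colon\Lg\to\Lg$ linearly by $D(x_i)=D'(x_i)$, $D(y_j)=D''(y_j)$, and $D(z_k)=0$ (note that $D'(z_k)=0=D''(z_k)$ automatically, because $z_k$ is central in each subalgebra). The map $D$ is a derivation on $\Lg$: since $D(a)\in[\Lg,\Lg]\subseteq Z(\Lg)$, both $[D(a),b]$ and $[a,D(b)]$ vanish, while $D([a,b])=0$ because $[a,b]\in\langle z_1,\ldots,z_p\rangle$. Using $(\star)$ once more,
\[
D(v)=D'(v_x)+D''(v_y)\in[v_x,\Lg_x]+[v_y,\Lg_y]=[v,\Lg],
\]
so $D\in\AID(\Lg)$ with $\Phi(D)=(D',D'')$, and the dimension formula follows.

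The only place real care is needed is the formula $(\star)$ in the first paragraph: this is exactly where the hypothesis $[x_i,y_j]=0$ is used, and without it the decomposition of $[v,\Lg]$ into an $\Lg_x$-part and an $\Lg_y$-part fails. Once $(\star)$ is established, the rest of the argument is a routine check at the level of basis elements.
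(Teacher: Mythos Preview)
Your proof is correct and follows essentially the same approach as the paper: decompose an almost inner derivation of $\Lg$ into its restrictions to $\Lg_x$ and $\Lg_y$, and conversely extend almost inner derivations of the subalgebras back to $\Lg$. The paper's proof is a brief sketch of this; your version fills in the details carefully, in particular by isolating the identity $(\star)$ and verifying well-definedness, injectivity, and surjectivity of the restriction map explicitly.
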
  

\begin{proof}
 Let $D\in \AID(\Lg)$ and write $e=x+y+z$, where $x$ is a linear combination of the $x_i$, $y$ is a linear
 combination of the $y_i$ and $z$ a linear combination of the $z_i$. Then there are maps
$\varphi_{D_x}: \Lg_x \to \Lg_x$ and $\varphi_{D_y}: \Lg_y \to \Lg_y$ such that
\[
D(e) = D(x + y + z) = \left[x, \varphi_{D_x}(x)\right] + \left[y, \varphi_{D_y}(y)\right],
\] 
This means that $D_x: \Lg_x \to \Lg_x,\; x \mapsto D_{\mid_{\Lg_x}}(x) \in \AID(\Lg_x)$ with determination map
$\varphi_{D_x}$ and $D_y: \Lg_y \to \Lg_y,\; y \mapsto D_{\mid_{\Lg_y}}(y) \in \AID(\Lg_y)$ determined by $\varphi_{D_y}$.
Conversely, any almost inner derivation of $\Lg_x$ or $\Lg_y$ can be extended to an almost inner derivation of $\Lg$.
\end{proof}  

Finally we can state our main result of this paper by combining the previous lemmas. For clarity, we formulate this result
as two separate theorems depending on the type of field $\K$ we are considering. We only give a proof for the last theorem
in case $\K=\R$. The proof for the other case is similar.

\begin{thm}\label{5.10}
Let $\Lg$ be a $2$-step nilpotent Lie algebra of genus $2$ over an algebraically closed field
$\K$ of characteristic not 2 with minimal indices $\varepsilon_1,\ldots,\varepsilon_k$ and elementary divisors
\[
  (\infty,e_1),\ldots,(\infty,e_s),\; (\alpha_1,f_1), \ldots, (\alpha_l,f_l)
\]  
with $\alpha_j \in \K$ for all $1 \leq j \leq l$.
Then we have
\[        
  \dim(\AID(\Lg)) = \dim(\Inn(\Lg)) + \sum_{j = 1, \ep_j\neq 0}^k (\varepsilon_j - 1) + 2\sum_{j = 1}^s (e_j - 1)
  + 2\sum_{j = 1}^l (f_l - 1). 
\]
\end{thm}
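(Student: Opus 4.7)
The strategy is to reduce to the single-block cases treated in Lemmas~\ref{5.2}, \ref{5.3}, \ref{5.6} and the remark following Lemma~\ref{5.6}, by means of the decomposition result Lemma~\ref{5.8}. First, since $\AID$ and $\Inn$ are isomorphism invariants, the corollary following Proposition~\ref{canon-form} lets me replace $\Lg$ by the canonical Lie algebra attached to the given list of invariants. Its skew pencil is then block-diagonal, with one block $F(\infty,e_j)$, $F(\alpha_j,f_j)$, or $M_{\varepsilon_j}$ per entry of the list, and all blocks share the same commutator ideal $\langle y_1,y_2\rangle$, while $x$-basis vectors of different blocks commute because $A$ and $B$ are block-diagonal.

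Next I peel off the blocks one at a time using Lemma~\ref{5.8}: at each step I take $(z_1,z_2)=(y_1,y_2)$, the $x$-basis of the chosen block in the role of ``$x$'', and the $x$-bases of all remaining blocks in the role of ``$y$''. The block-diagonality of $A$ and $B$ is exactly the hypothesis $[x_i,y_j]=0$ needed. Iterating yields
\[
\dim \AID(\Lg) \;=\; \sum_B \dim \AID(\Lg_B),
\]
where $\Lg_B$ denotes the canonical Lie algebra attached to the single block $B$ (viewed as its own $2$-step nilpotent Lie algebra with basis $\{x^{(B)}_1,\ldots,x^{(B)}_{n_B},y_1,y_2\}$).

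Then I substitute the block dimensions. Lemma~\ref{5.2} gives $\dim\AID(\Lg_{(\infty,e_j)}) = 4e_j-2 = 2e_j + 2(e_j-1)$; Lemma~\ref{5.3} gives $\dim\AID(\Lg_{(\alpha_j,f_j)}) = 4f_j-2 = 2f_j + 2(f_j-1)$; Lemma~\ref{5.6} gives $\dim\AID(\Lg_{\varepsilon_j}) = 3\varepsilon_j = (2\varepsilon_j+1)+(\varepsilon_j-1)$ for $\varepsilon_j\geq 1$; and the subsequent remark handles $\varepsilon_j=0$ trivially. In each case the first summand equals $\dim\Inn$ of the corresponding block. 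To finish, I verify $\dim\Inn(\Lg)=\sum_B\dim\Inn(\Lg_B)$, which via $\Inn(\Lg)\cong \Lg/Z(\Lg)$ reduces to showing that $Z(\Lg)$ consists of $\langle y_1,y_2\rangle$ together with one extra central generator for each $M_0$ block; equivalently, that the common kernel of $A$ and $B$ restricted to any non-trivial canonical block is zero, a direct inspection of the matrices $F(\infty,e)$, $F(\alpha,f)$, and $M_\varepsilon$ with $\varepsilon\geq 1$ from Proposition~\ref{canon-form}. Summing the block contributions and rearranging yields the stated formula.

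The only real care needed is in the iterative application of Lemma~\ref{5.8}: the shared commutator ideal $\langle y_1,y_2\rangle$ must be consistently identified with the shared central piece $\langle z_1,\ldots,z_p\rangle$ of that lemma at every step, so that it is not double-counted when the $\AID$-dimensions of the block pieces are added. Once that bookkeeping is in place, the combinatorics is routine.
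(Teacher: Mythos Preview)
Your proposal is correct and follows essentially the same approach as the paper: reduce to a canonical Lie algebra, then use Lemma~\ref{5.8} to peel off one block at a time and invoke Lemmas~\ref{5.2}, \ref{5.3}, \ref{5.6} and the remark on $\varepsilon=0$ for the base cases. The paper phrases this as an induction on the number $N$ of blocks rather than as an iteration, and leaves the verification of $\dim\Inn(\Lg)=\sum_B\dim\Inn(\Lg_B)$ implicit, but the content is the same.
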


\begin{thm}\label{5.11}
Let $\Lg$ be a $2$-step nilpotent Lie algebra of genus $2$ over $\R$ with minimal indices
$\varepsilon_1,\ldots,\varepsilon_k$ and elementary divisors
\[
  (\infty,e_1),\ldots,(\infty,e_s),\; (\alpha_1,f_1), \ldots, (\alpha_l,f_l),\;
  (\beta_1,m_1),\ldots,(\beta_p,m_p)
\]  
with $\alpha_j \in \R$ and $\beta_r = a_r + b_r i \in \C \setminus \R$ for all $1 \leq j \leq l$ and $1 \leq r \leq p$.
Then we have
\[        
  \dim(\AID(\Lg)) = \dim(\Inn(\Lg)) + \sum_{j = 1, \ep_j\neq 0}^k (\varepsilon_j - 1) + 2\sum_{j = 1}^s (e_j - 1)
  + 2\sum_{j = 1}^l (f_l - 1) + 4 \sum_{j = 1}^p m_j. 
\]
\end{thm}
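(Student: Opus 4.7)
The plan is to reduce to a canonical Lie algebra and then decompose the computation block by block. By the corollary following Definition~\ref{3.9}, we may assume $\Lg$ is canonical, so that its associated pencil is block diagonal with one block of type $F(\infty, e_j)$, $F(\alpha_j, f_j)$, $C(a_r, b_r, m_r)$, or $M_{\varepsilon_j}$ for each listed invariant. In the canonical basis, any two basis vectors $x_i$ lying in distinct pencil blocks have vanishing bracket, since the relevant entries of $A$ and $B$ are zero. Hence, for any partition of the blocks into two groups, the hypothesis of Lemma~\ref{5.8} is satisfied, with the shared $z$-part of the basis being $\{y_1, y_2\}$.

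Applying Lemma~\ref{5.8} iteratively, peeling off one block at a time, I then obtain
\[
\dim(\AID(\Lg)) \;=\; \sum_{k} \dim\!\bigl(\AID(\Lg^{(k)})\bigr),
\]
where $\Lg^{(k)}$ denotes the one-block canonical Lie algebra (spanned by the $x_i$ of that block together with $\langle y_1, y_2\rangle$) associated to the $k$-th block. The previous lemmas supply each summand: Lemma~\ref{5.2} gives $4e_j - 2$ for $F(\infty, e_j)$, Lemma~\ref{5.3} gives $4f_j - 2$ for $F(\alpha_j, f_j)$, Lemma~\ref{5.4} gives $8m_r$ for $C(a_r, b_r, m_r)$, Lemma~\ref{5.6} gives $3\varepsilon_j$ for $M_{\varepsilon_j}$ with $\varepsilon_j \geq 1$, and the remark following Lemma~\ref{5.6} contributes $0$ for each $M_0$. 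Rewriting each value as $\dim(\Inn(\Lg^{(k)}))$ plus the excess $2(e_j - 1)$, $2(f_j - 1)$, $4m_r$, $\varepsilon_j - 1$, or $0$ respectively gives exactly the summation appearing in the statement of Theorem~\ref{5.11}.

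What remains is to identify $\sum_k \dim(\Inn(\Lg^{(k)}))$ with $\dim(\Inn(\Lg))$. Since $\Inn(\Lg) \cong \Lg/Z(\Lg)$, this amounts to showing that $Z(\Lg) \cap \langle x_1, \ldots, x_n\rangle$ consists precisely of the $x_i$ coming from $M_0$ blocks. Because distinct blocks commute, this central intersection splits as a direct sum over blocks, and on each block it equals $\ker A \cap \ker B$: for $F$- and $C$-blocks the pencil is regular, so $\ker A \cap \ker B = 0$, and for $M_\varepsilon$ with $\varepsilon \geq 1$ a direct inspection of the one-dimensional kernels of $A$ and $B$ shows that they meet only in $0$. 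The main obstacle is precisely this bookkeeping — justifying the iteration of Lemma~\ref{5.8} cleanly, and verifying that $\dim(\Inn)$ distributes additively over blocks — but none of it is genuinely hard, and once in place, summing the excess terms yields the formula in Theorem~\ref{5.11}. The proof for the algebraically closed case of Theorem~\ref{5.10} is identical, simply with no $C(a, b, m)$ blocks occurring.
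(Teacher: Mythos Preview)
Your proposal is correct and follows essentially the same approach as the paper's proof: reduce to a canonical Lie algebra, then use Lemma~\ref{5.8} to peel off blocks one at a time and invoke Lemmas~\ref{5.2}--\ref{5.6} (and the remark on $M_0$) for the single-block contributions. The paper phrases this as an induction on the number $N$ of blocks rather than an explicit iterated application of Lemma~\ref{5.8}, but the content is identical; your version is in fact more careful than the paper's, since you spell out the additivity of $\dim(\Inn)$ over blocks, which the paper's proof uses implicitly but does not verify.
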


\begin{proof}
We may assume that $\Lg$ is canonical with $N:=k+s+l+p$ blocks. We will show the result by induction over $N$.
For $N=1$ the claim follows from the previous lemmas. If we have a canonical Lie algebra $\Lg$ with $N+1$ blocks,
we take a basis $\{ x_1,\ldots ,x_n,y_1,\ldots ,y_m,z_1,z_2\}$, where $z_1,z_2$ span $[\Lg,\Lg]$ and where
$\{x_1,\ldots ,x_n\}$ corresponds to the first $N$ blocks and $\{y_1,\ldots ,y_m\}$ to the last block. Since we have
$[x_i,y_j]=0$ for all $i,j$, we can apply Lemma $\ref{5.8}$ to show that the results holds for $N+1$ blocks if it holds for
$N$ blocks.
\end{proof}

\section*{Acknowledgments}
Dietrich Burde is supported by the Austrian Science Foun\-da\-tion FWF, grant I3248.
Karel Dekimpe and Bert Verbeke are supported by a long term structural funding, the Methusalem grant of the
Flemish Government and the Research Foundation Flanders (Project G.0F93.17N)

\end{document}